\documentclass[11pt]{amsart}

\usepackage{amsmath,amssymb,mathtools}
\usepackage[hidelinks]{hyperref}

\numberwithin{equation}{section}
\allowdisplaybreaks[4]

\newtheorem{theorem}{Theorem}[section]
\newtheorem{lemma}[theorem]{Lemma}
\newtheorem{proposition}[theorem]{Proposition}
\newtheorem{corollary}[theorem]{Corollary}
\theoremstyle{remark}
\newtheorem{remark}[theorem]{Remark}

\title[Liouville rigidity for a Lorentzian mean-curvature equation]
{Liouville Rigidity and Universal Spacelikeness Estimates for a Lorentzian Prescribed Mean Curvature Equation}

\author{Xi-Nan Ma}
\address{School of Mathematical Sciences, University of Science and Technology of China, Hefei, Anhui 230026, People's Republic of China}
\email{xinan@ustc.edu.cn}

\author{Tian Wu}
\address{School of Mathematical Sciences, University of Science and Technology of China, Hefei, Anhui 230026, People's Republic of China}
\email{wt1997@ustc.edu.cn}

\author{Wangzhe Wu}
\address{School of Mathematics and Statistics, Ningbo University, Ningbo, Zhejiang, People's Republic of China}
\email{wuwz18@mail.ustc.edu.cn}

\author{Bao Yu}
\address{School of Mathematical Sciences, University of Science and Technology of China, Hefei, Anhui 230026, People's Republic of China}
\email{baoyu1@mail.ustc.edu.cn}

\date{}

\begin{document}

\begin{abstract}
We prove a Liouville theorem for nonnegative entire strictly spacelike solutions of
\[
\operatorname{div}\left(\frac{\nabla u}{\sqrt{1-|\nabla u|^2}}\right)+u^p=0
\qquad\text{in }\mathbb R^n.
\]
If $n=2$ and $p\geqslant1$, or if $n\geqslant3$ and
$1\leqslant p<\frac{n+2}{n-2}$, every nonnegative $C^2$ solution satisfying
$|\nabla u|<1$ vanishes identically. No symmetry, decay, integrability, or
uniform spacelike gap is assumed. A key independent ingredient is a universal
bound, valid for every $n\geqslant2$ and $p\geqslant1$, for both the height $u$
and the Lorentz factor $(1-|\nabla u|^2)^{-1/2}$. Thus pointwise strict
spacelikeness automatically improves to a uniform spacelike gap, including in
the critical and supercritical regimes. The proof combines a geometric
Bernstein estimate, comparison with an explicit hyperbolic cap, and weighted
trace-free tensor identities. The result extends the known radial
nonexistence theorem to arbitrary entire solutions and yields a geometric
half-space rigidity theorem for complete spacelike hypersurfaces.
\end{abstract}

\subjclass[2020]{35J62, 35B53, 53A10}
\keywords{Liouville theorem, Lorentz--Minkowski space, prescribed mean curvature equation, Born--Infeld operator, spacelike graph, trace-free tensor}

\maketitle

\section{Introduction}\label{sec:introduction}

We consider the quasilinear equation
\begin{equation}\label{eq:main-equation}
\operatorname{div}\left(\frac{\nabla u}{\sqrt{1-|\nabla u|^2}}\right)+u^p=0
\qquad\text{in }\mathbb R^n,
\end{equation}
under the spacelike constraint $|\nabla u|<1$. The graph of $u$ is a
spacelike hypersurface in Lorentz--Minkowski space, and the differential
operator in \eqref{eq:main-equation} is the Lorentzian mean-curvature
operator of this graph. The geometric theory includes the Bernstein-type
rigidity theorem of Cheng and Yau \cite{ChengYau1976}, the Dirichlet theory
of Bartnik and Simon \cite{BartnikSimon1982}, and Treibergs' construction of
entire constant-mean-curvature hypersurfaces \cite{Treibergs1982}. The same
operator arises as the electrostatic Born--Infeld operator introduced by Born
and Infeld \cite{BornInfeld1934}. Its modern variational and regularity theory
has been developed, among others, by Bonheure, d'Avenia, and Pomponio
\cite{BonheureDaveniaPomponio2016} and by Byeon, Ikoma, Malchiodi, and Mari
\cite{ByeonIkomaMalchiodiMari2024}. In both settings, the principal analytic
difficulty is the singular behavior of the operator as $|\nabla u|\uparrow1$.

The entire-space problem for \eqref{eq:main-equation} has previously been
studied mainly through radial reduction. Bonheure, De Coster, and Derlet
obtained radial solutions for Lorentz--Minkowski mean-curvature equations
\cite{BonheureDeCosterDerlet2012}, while Azzollini developed a radial
ground-state theory \cite{Azzollini2014,Azzollini2016}. For the pure-power
equation \eqref{eq:main-equation} in dimensions $n\geqslant3$, Azzollini
proved the nonexistence of radial ground states when
$1<p<\frac{n+2}{n-2}$ and the existence of radial ground states in the
supercritical regime \cite{Azzollini2016}. The exclusion of nonradial ground
states was left open because no radial-symmetry result was available for this
operator. Our result settles this nonradial obstruction and proves a stronger
statement: it excludes every nonnegative entire strictly spacelike solution,
without assuming decay at infinity.

From the Liouville perspective, \eqref{eq:main-equation} should be compared
with the Lane--Emden equation $\Delta u+u^p=0$ in $\mathbb R^n$. For the
semilinear model, subcritical nonexistence, radial symmetry, and critical
classification are classical; see Gidas and Spruck \cite{GidasSpruck1981},
Gidas, Ni, and Nirenberg \cite{GidasNiNirenberg1979}, and Caffarelli, Gidas,
and Spruck \cite{CaffarelliGidasSpruck1989}. General methods for quasilinear
Liouville theorems and universal bounds were developed by Serrin and Zou
\cite{SerrinZou2002}. Equation \eqref{eq:main-equation}, however, is neither
scale invariant nor uniformly elliptic under the pointwise spacelike
condition alone. Consequently, neither the Lane--Emden scaling argument nor
a direct moving-plane reduction yields the desired nonradial result.

Our main Liouville theorem is the following.

\begin{theorem}\label{thm:main}
Suppose that $n\geqslant2$ and that
\[
\begin{cases}
p\geqslant1, & n=2,\\[2mm]
1\leqslant p<\dfrac{n+2}{n-2}, & n\geqslant3.
\end{cases}
\]
Let $u\in C^2(\mathbb R^n)$ be a nonnegative entire solution of
\eqref{eq:main-equation} satisfying $|\nabla u|<1$ in $\mathbb R^n$. Then
$u\equiv0$.
\end{theorem}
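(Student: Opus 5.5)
The plan follows the structure announced in the abstract. First we obtain a universal bound for $u$ and for $w:=(1-|\nabla u|^2)^{-1/2}$. Then we run an integral-identity argument in the spirit of Gidas--Spruck and Serrin--Zou, which becomes legitimate once the operator is uniformly elliptic.

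\textbf{Step 1: height bound by comparison with a hyperbolic cap.} Suppose $u(x_0)=M$ is large. Since $|\nabla u|<1$, we have $u\geqslant M-|x-x_0|$, so $u\geqslant M/2$ on the ball $B_{M/2}(x_0)$. On this ball the mean curvature $H=u^p/n\geqslant (M/2)^p/n$. A spacelike graph with mean curvature at least $h$ can be compared with the hyperboloid cap of mean curvature $h$, which has spacelike diameter of order $1/h$. Comparing the graph of $u$ with a suitably translated cap over a concentric ball, and using the maximum principle for the Lorentzian mean-curvature operator, forces $M/2\lesssim 1/h\sim M^{-p}$. Hence $M\leqslant C(n,p)$ for $p\geqslant1$, so $u$ is bounded. (The cap touches from below, and an interior tangency contradicts the strict inequality $H_u>H_{\rm cap}$.)

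\textbf{Step 2: gradient (Lorentz factor) bound via a geometric Bernstein estimate.} On the graph, set $w=-\langle\nu,e_{n+1}\rangle$. This function satisfies the Jacobi-type identity $\Delta_g w=w(|A|^2+\ldots)+\langle\nabla H,\ldots\rangle$, where $\nabla H$ is controlled by $p u^{p-1}|\nabla u|\leqslant C$ because of Step 1. Multiply by a cutoff in the Lorentzian "height" coordinate $\varphi=(R^2-|x-x_0|^2+(u-u(x_0))^2)_+$, which has compact support on spacelike graphs. Then apply the maximum principle to $\varphi^\beta w$, using $|A|^2\geqslant H^2/n$, to get $w(x_0)\leqslant C(n,p)$ independent of $x_0$. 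This is the universal spacelike gap: $|\nabla u|\leqslant 1-\delta$.

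\textbf{Step 3: Liouville via trace-free tensor identities.} With $w$ bounded, the equation is uniformly elliptic, and $u$ has bounded $C^{2,\alpha}$ norms. Writing $a(\nabla u)=\nabla u/\sqrt{1-|\nabla u|^2}$, we consider the weighted trace-free tensor
\[E_{ij}=u^{-\gamma}\Bigl(\partial_j a_i-\tfrac1n\,\delta_{ij}\operatorname{div}a\Bigr)-\ldots,\]
following the Serrin--Zou / Gidas--Spruck computation. For a suitable $\gamma$ and with lower-order corrections coming from the non-homogeneous operator, we derive an identity of the form
\[\int \eta\, u^{-\gamma}|E|^2+c\int\eta\, u^{-\gamma-2}|\nabla u|^4\leqslant \int(\text{terms involving }\nabla\eta).\]
The coefficient $c>0$ exactly when $p<\frac{n+2}{n-2}$ (any $p$ if $n=2$). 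The Lorentzian corrections are of higher order in $|\nabla u|$, and the terms of the form $|\nabla u|^4$ must absorb them.

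With cutoffs at scale $R$ and the bounds from Steps 1–2, the right-hand side is $o(1)$ after absorbing via Young's inequality. Hence $E\equiv0$ and $\nabla u\equiv0$, so $u$ is constant, and the equation gives $u^p=0$, i.e.\ $u\equiv0$. If $u$ is not positive, the strong maximum principle gives $u>0$ or $u\equiv0$, since $\operatorname{div}a=-u^p\leqslant0$.

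The main obstacle is Step 3. The operator is not homogeneous, so the Lane--Emden algebra does not close exactly, and the extra terms in $|\nabla u|^2$ must be controlled. They have a favourable sign or are small only because of the uniform gap. One also needs the right-hand side to decay without any decay assumption on $u$. The first thing to try is to choose the weight $u^{-\gamma}w^{\kappa}$ and tune $\kappa$ so that the mixed terms become a perfect square.
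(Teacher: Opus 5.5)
There is a genuine gap at the foundation: your Step 1 does not prove a height bound, and Step 2 is built on it.

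The picture ``a graph with mean curvature $\geqslant h$ lives over a ball of radius $\lesssim1/h$'' is Euclidean. It rests on the bounded flux $|\nabla u|/\sqrt{1+|\nabla u|^2}<1$. In $\mathbb R^{n,1}$ the flux $W|\nabla u|$ is unbounded. The comparison hyperboloid $b=c-\sqrt{(n/h)^2+|x-x_0|^2}$, which satisfies $\operatorname{div}\bigl(\nabla b/\sqrt{1-|\nabla b|^2}\bigr)=-h$, is an \emph{entire} spacelike graph and has no diameter of order $1/h$. Comparison on $B_r(x_0)$ with boundary minimum $m$ gives only $u(x_0)\geqslant m+\frac{n}{m^p}\bigl(\sqrt{1+(m^pr/n)^2}-1\bigr)\geqslant m+r-nm^{-p}$. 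This is consistent with $r=M/2$ and $m\approx M/2$. Indeed, the radial solution with $u(0)=M$, $u'(0)=0$ exists, is strictly spacelike, and stays above $M/2$ on $B_{M/2}$ for every $M$. So the local information you use cannot bound $M$.

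What comparison really shows is that $u$ must fall at nearly the light-cone rate, so $W$ is large at a boundary point. A contradiction then needs an upper bound for $W$ there that does not presuppose a height bound. Your Step 2 presupposes one, so the argument is circular. Even with $u$ bounded, the relevant term in $\Delta_{\bar g}W$ is $\langle\bar\nabla H,\bar\nabla u\rangle_{\bar g}=-pu^{p-1}(W^2-1)$, which grows like $W^2$; the Euclidean bound on $\nabla H$ does not control it. Your cutoff $(R^2-\rho^2)_+$ also lacks quantitative support control: its support is very long wherever the graph is nearly null.

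The paper reverses the order. It first proves $W\leqslant C(n,p)(1+u)^{p+1}$ by the maximum principle for $\Theta^\beta W(1+u)^{-(p+1)}$, with $\Theta=1-\rho^2/R^2-\tau/R$ and $R=1+u(x_0)$. The weight produces $(p+1)(W^2-1)/(1+u)^2$, which absorbs $-pu^{p-1}(W^2-1)/W$; this is where $p\geqslant1$ enters. The $-\tau/R$ term, together with $u\geqslant0$, confines the support to $B_{\sqrt3R}(x_0)\cap\{u<2R\}$. Only then does the cap comparison on $B_R(y)$ with $R=u(y)-1$ close. It yields a boundary minimum $m\leqslant n+1$, and a one-sided normal derivative at the minimum point $\hat x$ gives $W(\hat x)\geqslant\sqrt{1+(m^pR/n)^2}$. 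Bernstein gives $W(\hat x)\leqslant C(n+2)^{p+1}$, so $R$ is bounded.

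Step 3 is only a sketch, and its guiding heuristic is wrong. The uniform gap bounds $W$ but does not make $|\nabla u|^2$ small, so the Lorentzian corrections are not perturbative and cannot be absorbed ``because of the gap''. The paper needs an exact identity:
\begin{itemize}
\item Alongside $K_{ij}=X_{i,j}-\frac Hn\delta_{ij}$, it uses $L_{ij}=\frac{X_iu_j}{u}-\frac{|\nabla u|^2W}{nu}\delta_{ij}$ with coefficient $f(|\nabla u|^2)$.
\item Here $f$ solves $np+(n+2-x)f+2x(1-x)f'=0$, so the term $H|\nabla u|^2W/u$ cancels exactly.
\item It proves $E_{ij}E_{ji}\geqslant0$ for the nonsymmetric $E=K+h_1L$ from the relation $E_{\iota1}=(1-u_1^2)E_{1\iota}$.
\item It checks coercivity on the whole range $|\nabla u|^2\in[0,1]$ by evaluating a concave quadratic at $f(0)$ and $f(1)$. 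The $f(1)$ check uses $p>1$, $n\geqslant3$, and subcriticality.
\end{itemize}

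Finally, your Young's-inequality closing degenerates at $p=1$, where the conjugate exponent $\frac{\beta+2p}{2(p-1)}$ blows up. The paper treats $p=1$ and $n=2$ separately by testing with $u^{-1}\eta^2$. For $p=1$ this gives $|B_R|\leqslant CR^{n-2}$. For $n=2$ it gives a finite weighted energy whose vanishing annular tail forces $\int u^{p-1}=0$.
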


A second main result supplies the uniform ellipticity that is not assumed in
Theorem~\ref{thm:main}. It is independent of the Sobolev restriction and
continues to hold in the critical and supercritical regimes.

\begin{theorem}[Universal height and spacelikeness estimates]
\label{thm:universal-bounds}
Let $n\geqslant2$ and $p\geqslant1$. There exists $C=C(n,p)>1$ such that every
nonnegative entire $C^2$ solution of \eqref{eq:main-equation} satisfying
$|\nabla u|<1$ obeys
\[
\sup_{\mathbb R^n}u\leqslant C,
\qquad
\sup_{\mathbb R^n}\frac{1}{\sqrt{1-|\nabla u|^2}}\leqslant C.
\]
In particular,
\[
|\nabla u|\leqslant\sqrt{1-C^{-2}}<1
\qquad\text{throughout }\mathbb R^n.
\]
\end{theorem}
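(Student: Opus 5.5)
The plan has two stages: first a universal height bound by comparison with a hyperbolic cap, then a universal bound on the Lorentz factor from a Bernstein-type estimate on the graph, with constants depending on $\sup u$.

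\textbf{Height bound.} Write $w=(1-|\nabla u|^2)^{-1/2}$. The equation says that the graph $\Sigma$ of $u$ has Lorentzian mean curvature $H=u^p/n$ (suitably normalized), and $H\geqslant0$ since $u\geqslant0$.

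1. Suppose $u(x_0)=M$ is large. Since $|\nabla u|<1$, we have $u\geqslant M-|x-x_0|$. Hence $u\geqslant M/2$ on the ball $B_{M/2}(x_0)$, and there $\operatorname{div}(\nabla u/\sqrt{1-|\nabla u|^2})\leqslant-(M/2)^p$.

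2. Compare with the explicit hyperbolic cap, the upper hyperboloid
\[
\phi(x)=c+\sqrt{R^2+\ldots}
\]
more precisely the spacelike cap $\phi(x)=c-\sqrt{\rho^2+|x-x_0|^2}+\ldots$. The natural candidate is the graph of constant mean curvature $\lambda$, namely $\phi_\lambda(x)=-\sqrt{\lambda^{-2}+|x-x_0|^2}$ (up to dimensional normalization). This satisfies $\operatorname{div}(\nabla\phi/\sqrt{1-|\nabla\phi|^2})=-n\lambda$ on all of $\mathbb R^n$, with $|\nabla\phi|\to1$ at infinity.

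3. Choose $\lambda\sim M^p$ so that $u$ is a strict supersolution relative to $\phi+c$ on $B_{M/2}(x_0)$. Slide $\phi+c$ from below until first contact. Interior contact is impossible by the comparison principle for the (pointwise elliptic) operator, so contact must occur on $\partial B_{M/2}$.

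4. The cap drops by roughly $M/2-\lambda^{-1}$ across the ball, while $u$ can drop at most $M/2$. Using that $u$ is $1$-Lipschitz and that the cap's slope is nearly $1$ once $|x-x_0|\gg\lambda^{-1}$, this is a contradiction for $M$ large: a function with curvature forcing of order $M^p$ must bend down faster than light-cone speed allows over a scale $M$.

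5. A cleaner alternative is to integrate the equation on $B_r(x_0)$. Since the flux $|\nabla u|w^{-1}$ is at most $w|\nabla u|$, this seems insufficient on its own, because $w$ is unbounded a priori. The sliding-cap argument avoids this and uses only $|\nabla u|<1$. It gives $\sup u\leqslant C(n,p)$; for $p\geqslant1$ we need $M^p\cdot M\to\infty$, which holds.

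\textbf{Gradient bound.} With $u\leqslant C_0$ in hand, I would bound $w$ by a Cheng--Yau/Bartnik-type geometric Bernstein estimate on $\Sigma$.

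1. The function $w=-\langle\nu,e_{n+1}\rangle$ satisfies the Jacobi-type identity
\[
\Delta_\Sigma w=w|A|^2+\langle\nabla_\Sigma H,e_{n+1}\rangle .
\]
Here $H=u^p/n$, and $\nabla_\Sigma H=(p/n)u^{p-1}\nabla_\Sigma u$, whose size is controlled by $C\,w$ times bounded quantities, using that $|\nabla_\Sigma u|^2=w^2-1$.

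2. Following Bartnik's local estimate, take the cutoff
\[
\eta=(\tau-\ell)_+,\qquad \ell=\langle X-X_0,e_{n+1}\rangle+\text{const},
\]
built from the timelike height $u$ and the Lorentzian distance to a point below the graph, for instance $\eta=(C_1-u(x)+\ldots)$. Equivalently, use the Lorentz-distance function $\tau^2=-\langle X-P,X-P\rangle$ from a point $P$ placed a fixed distance below $\Sigma$. Its sublevel sets are compact on any spacelike entire graph with bounded height.

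3. Apply the maximum principle to $\eta^K w$. Using Cauchy--Schwarz, absorb the gradient terms against $w|A|^2\geqslant wH^2/n$, together with the Hessian estimate $\Delta_\Sigma\tau\leqslant C(1+H w)/\tau$ style bounds.

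4. This yields $w(x_0)\leqslant C(n,p,C_0)$ at every point $x_0$, since the compact region around each point has uniformly controlled size once $u$ is bounded.

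\textbf{Main obstacle.} The hard step is the height bound: making the cap comparison rigorous when $u$ is only pointwise spacelike, so that the comparison principle is nonuniform. I would first try the sliding argument with the exact hyperboloid on a ball of radius $\sim M$ and track the boundary inequality carefully.
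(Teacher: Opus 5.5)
\textbf{Your height bound does not close.} In the paper's normalization, comparison with the cap of mean curvature $m^p$ on $B_r(x_0)$ gives only $u\geqslant b$. Hence $u(x_0)\geqslant b(x_0)\geqslant m+r-n/m^p$ with $m=\min_{\partial B_r(x_0)}u$, while the Lipschitz bound gives $u(x_0)\leqslant m+r$. These are compatible, because the cap drops by \emph{less} than $r$ across the ball. So step~4 (the cap drops by about $M/2-\lambda^{-1}$, $u$ by at most $M/2$) contains no contradiction.

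In fact none can be extracted from the local data you use. The radial solution with $u(0)=M$, $u'(0)=0$ has flux $A(u')=-r^{1-n}\int_0^r s^{n-1}u^p\,\mathrm ds$. Hence $|u'|<1$ and $u\geqslant M-r$, so for every $M$ it is a strictly spacelike solution with $u\geqslant M/2$ on $B_{M/2}$.

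What the comparison does give is that $u$ must descend at nearly light-cone speed toward a boundary minimum point $\hat x$, i.e.\ $W(\hat x)\geqslant\sqrt{1+(m^pr/n)^2}$. This is a contradiction only in the presence of a \emph{quantitative} upper bound for $W(\hat x)$, which pointwise spacelikeness does not provide. Your gradient bound, with constant $C(n,p,\sup u)$, presupposes the height bound, so the scheme is circular. The obstacle you flag, non-uniform ellipticity in the comparison, is harmless: on a compact ball $|\nabla u|$ stays away from $1$.

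\textbf{How the paper proceeds.} The paper reverses the order. Theorem~\ref{thm:bernstein} first proves $W\leqslant C(n,p)(1+u)^{p+1}$ with no a priori height bound. It uses the cutoff $\Theta=1-\rho^2/R^2-\tau/R$ with $R=1+u(x_0)$, built from the Lorentzian distance to $(x_0,u(x_0))$. This cutoff has compact positivity set precisely because $u\geqslant0$, and the test function is $\Theta^\beta W(1+u)^{-(p+1)}$.

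The weight $(1+u)^{-(p+1)}$ is essential. It creates the positive term $(p+1)(W^2-1)/(1+u)^2$ that absorbs the reaction term. Your proposed absorption against $w|A|^2\geqslant wH^2/n$ is too weak, since $\langle\nabla_\Sigma H,e_{n+1}\rangle$ has size $u^{p-1}(w^2-1)$, which is quadratic in $w$.

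Lemma~\ref{lem:height-bound} then compares with the cap on $B_R(y)$ with $R=u(y)-1$, not $u(y)/2$, so the ball reaches down to height $1$. Since $u\geqslant1$ there, the minimum lies on $\partial B_R(y)$. The inequality $u(y)\geqslant b(0)$ then forces $m\leqslant n+1$. At $\hat x$ one gets $\sqrt{1+(R/n)^2}\leqslant W(\hat x)\leqslant C(n,p)(n+2)^{p+1}$, which bounds $R$.

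The choice of radius matters. On $B_{M/2}$ one only gets $m\approx M/2$, and then both bounds on $W(\hat x)$ are of order $M^{p+1}$, so even the Bernstein estimate would not conclude.
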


Theorem~\ref{thm:main} assumes no symmetry, decay, or finite energy at
infinity. It also replaces the pointwise condition $|\nabla u|<1$ by a
uniform spacelike gap as a consequence of the equation, rather than as an
additional hypothesis. The supercritical radial solutions constructed in
\cite{Azzollini2016} show that the Liouville conclusion cannot be extended
beyond the Sobolev exponent. The critical endpoint
$p=\frac{n+2}{n-2}$ remains open when $n\geqslant3$.

The analytic theorem has the following coordinate-free geometric
consequence. Scalar mean curvature below denotes the trace, rather than the
normalized trace, of the second fundamental form.

\begin{corollary}[Geometric half-space rigidity]\label{cor:geometric-halfspace}
Let $n$ and $p$ satisfy the hypotheses of Theorem~\ref{thm:main}. Let
$P\subset\mathbb R^{n,1}$ be a spacelike affine hyperplane, let $T$ be its
future-directed unit timelike normal, and fix $X_0\in P$. Suppose that
$F:\Sigma^n\to\mathbb R^{n,1}$ is a connected, complete, $C^2$ spacelike
immersion. Define its time-height relative to $P$ by
$\tau=-\langle F-X_0,T\rangle_{\mathbb R^{n,1}}$, and assume that
$\tau\geqslant0$. If the scalar mean curvature with respect to the
future-directed unit normal satisfies $H=-\tau^p$, then $F(\Sigma)=P$.
\end{corollary}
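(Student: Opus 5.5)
The plan is to reduce the corollary to Theorem~\ref{thm:main} by writing $F(\Sigma)$ as an entire graph over $P$. First I choose Lorentz coordinates adapted to $P$. After a Lorentz transformation and a translation of $\mathbb R^{n,1}$ (both are isometries and preserve time orientation), I may assume $X_0=0$, $P=\{x_{n+1}=0\}$, and $T=e_{n+1}$, where the metric is $dx_1^2+\dots+dx_n^2-dx_{n+1}^2$. With these choices $\tau=-\langle F,e_{n+1}\rangle=x_{n+1}\circ F$ is just the time coordinate. Since $\Sigma$ is connected, complete and spacelike, the classical argument of Cheng--Yau/Bartnik--Simon applies. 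The projection $\pi\colon F(\Sigma)\to P$ onto the first $n$ coordinates does not decrease lengths, because spacelike vectors satisfy $|dx'|^2\geqslant|dx'|^2-dx_{n+1}^2$. Hence $\pi\circ F$ is a local diffeomorphism with the path-lifting property, so it is a covering map onto $\mathbb R^n$. As $\mathbb R^n$ is simply connected, $\pi\circ F$ is a diffeomorphism. Therefore $F(\Sigma)$ is the graph of a $C^2$ function $u\colon\mathbb R^n\to\mathbb R$ with $|\nabla u|<1$, and $u=\tau\geqslant0$.

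Next I check that the mean curvature condition becomes \eqref{eq:main-equation}. For the graph $x_{n+1}=u(x)$, the future-directed unit normal is $\nu=(\nabla u,1)/\sqrt{1-|\nabla u|^2}$. A standard computation gives the trace of the second fundamental form as $\pm\operatorname{div}\bigl(\nabla u/\sqrt{1-|\nabla u|^2}\bigr)$. I have to fix the sign convention carefully. The test case is the hyperboloid $u=\sqrt{1+|x|^2}$, which is convex toward the future; the stated convention $H=-\tau^p$ should match $\operatorname{div}(\cdots)=-u^p$, i.e.\ $H=\operatorname{div}(\nabla u/\sqrt{1-|\nabla u|^2})$ with respect to the future normal. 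Granting this, $H=-\tau^p$ reads $\operatorname{div}(\nabla u/\sqrt{1-|\nabla u|^2})+u^p=0$ on $\mathbb R^n$. Regularity is fine: $u\in C^2$ because $F$ is $C^2$ and $\pi\circ F$ is a $C^2$ diffeomorphism.

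Finally, Theorem~\ref{thm:main} gives $u\equiv0$, so $F(\Sigma)=P$. I expect the main obstacle to be the entire-graph step. One point is that completeness refers to the induced metric, so I must verify path lifting: a lift of a finite-length path in $\mathbb R^n$ has induced length at most the Euclidean length, and completeness then prevents escape in finite length. The other point is pinning down the sign convention of $H$ so that it agrees with the equation rather than its negative. If the paper's convention turned out to be the opposite one, the hypothesis would give $\operatorname{div}(\cdots)=u^p$ instead, and the argument would fail as stated. So I will verify the sign against the hyperboloid example before invoking Theorem~\ref{thm:main}.
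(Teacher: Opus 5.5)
Your proposal is correct and follows the paper's proof essentially step for step. The length-nondecreasing projection onto $P$, combined with completeness, gives path lifting, hence a covering map and a global diffeomorphism onto $P\simeq\mathbb R^n$; a Lorentz isometry then turns $H=-\tau^p$ into \eqref{eq:main-equation}, and Theorem~\ref{thm:main} applies. The sign you granted is exactly the paper's convention, fixed in Section~\ref{sec:geometric} by $h_{ij}=-\langle \mathrm D_{F_i}F_j,\nu\rangle_{\mathbb R^{n,1}}$ with $\nu=W(\nabla u,1)$, which gives $H=\operatorname{div}(W\nabla u)$; the hyperboloid example only checks the computation and cannot by itself choose the convention.
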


For compactly supported variations, equation \eqref{eq:main-equation} is the
Euler--Lagrange equation of
\[
\mathcal J_p(u;\Omega)
=\int_\Omega\left(\sqrt{1-|\nabla u|^2}
+\frac{|u|^{p+1}}{p+1}\right)\,\mathrm dx.
\]
Thus Corollary~\ref{cor:geometric-halfspace} can also be viewed as a rigidity
statement for complete critical spacelike hypersurfaces constrained to one
side of a reference hyperplane.

We briefly describe the proof. Writing
$W=(1-|\nabla u|^2)^{-1/2}$, we first establish the pointwise Bernstein
estimate
\[
W\leqslant C(1+u)^{p+1}.
\]
The argument uses the induced metric on the spacelike graph. Its Lorentzian
cutoff is adapted to both the horizontal and vertical variables. A comparison
with an explicit hyperbolic cap then yields a universal height bound. Together
these estimates prove Theorem~\ref{thm:universal-bounds} and make
\eqref{eq:main-equation} uniformly elliptic.

The Liouville argument then separates into three cases. The linear-reaction
case $p=1$ follows in every dimension from a direct logarithmic test. In dimension
two, the same test yields a finite weighted energy, and the vanishing of its
annular tail completes the proof for every $p>1$. When $n\geqslant3$ and
$p>1$, we combine two weighted trace-free tensor identities. With
\[
X_i=Wu_i,
\qquad
K_{ij}=X_{i,j}-\frac{H}{n}\delta_{ij},
\qquad
H=\operatorname{div}(W\nabla u),
\]
the resulting quadratic form is coercive in the subcritical range. After
integration against a cutoff, Young's inequality produces a strictly negative
power of the radius and forces the solution to vanish. This strategy belongs
to the trace-free, or invariant-tensor, approach to rigidity problems; compare
Obata \cite{Obata1962} and the recent elliptic applications
\cite{MaWu2026,MaWuInvariant2024}.

The paper is organized as follows. Section~\ref{sec:geometric} records the
geometric identities and the regularity reduction. Section~\ref{sec:Bernstein}
establishes the Bernstein estimate, and Section~\ref{sec:bound} proves the
height bound and Theorem~\ref{thm:universal-bounds}. In
Section~\ref{sec:subcritical}, we first treat the elementary cutoff cases and
then derive the tensor identity needed in dimensions $n\geqslant3$. The proof
of Corollary~\ref{cor:geometric-halfspace} is given at the end of that section.

For completeness, the elementary one-dimensional analogue is recorded in
Remark~\ref{rem:one-dimensional}.

\section{Geometric preliminaries and regularity}\label{sec:geometric}

Let $n\geqslant2$ and $p\geqslant1$, and let $u$ be a nonnegative entire
$C^2$ solution of
\[
\operatorname{div}(W\nabla u)+u^p=0,
\qquad
W=(1-|\nabla u|^2)^{-\frac12}.
\]
We assume that $u$ is strictly spacelike; that is,
$|\nabla u(x)|<1$ for every $x\in\mathbb R^n$.

\begin{lemma}[Regularity reduction]\label{lem:regularity-reduction}
If $u\not\equiv0$, then $u>0$ and $u\in C^\infty(\mathbb R^n)$.
\end{lemma}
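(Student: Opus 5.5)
The plan is to rewrite the equation in nondivergence form and then apply the strong maximum principle and a Schauder bootstrap. Expanding the divergence for $u\in C^2$ gives
\[
a^{ij}(\nabla u)\,u_{ij}+u^p=0,\qquad
a^{ij}(\xi)=W\Bigl(\delta_{ij}+\frac{\xi_i\xi_j}{1-|\xi|^2}\Bigr),\quad W=(1-|\xi|^2)^{-1/2}.
\]
The matrix $a^{ij}(\xi)$ has eigenvalues $W$ and $W^3$. Since $|\nabla u|$ is continuous and strictly less than $1$, on every compact set $K$ we have $\max_K|\nabla u|<1$. Hence the operator is uniformly elliptic on $K$, with continuous coefficients $a^{ij}(\nabla u(x))$ and ellipticity constants depending on $K$. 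The argument is therefore purely local, and no global gap is needed.

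\emph{Positivity.} Since $u\geqslant0$ and $u^p\geqslant0$, we have $a^{ij}(\nabla u)u_{ij}=-u^p\leqslant0$. So $u$ is a nonnegative supersolution of the linear operator $Lv=a^{ij}(x)v_{ij}$ with $a^{ij}(x):=a^{ij}(\nabla u(x))$ frozen, and this operator has no zeroth-order term. Suppose $u(x_0)=0$ at some point. Then $x_0$ is an interior minimum, and Hopf's strong maximum principle for $C^2$ supersolutions of uniformly elliptic operators with bounded coefficients on a ball around $x_0$ forces $u\equiv0$ on that ball. Thus the zero set is open. It is also closed, so by connectedness $u\equiv0$, contradicting $u\not\equiv0$. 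Hence $u>0$. If one prefers to avoid using $p\geqslant1$ to get Lipschitz control of $u^p$, the argument above already does not need it, since the sign of $u^p$ is all that is used.

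\emph{Smoothness.} Because $u>0$, the map $s\mapsto s^p$ is smooth near the range of $u$ on each compact set. I would bootstrap via the difference-quotient and divergence structure. Set $A(\xi)=W(\xi)\xi$, which is smooth on $\{|\xi|<1\}$ with $DA=a^{ij}$ uniformly positive definite on $\{|\xi|\leqslant1-\delta\}$. Differentiating the equation in the divergence form $\partial_i(A^i(\nabla u))=-u^p$ in direction $e_k$ shows that $w=u_k\in C^1$ is a weak solution of
\[
\partial_i\bigl(a^{ij}(\nabla u)\,\partial_j w\bigr)=-p\,u^{p-1}u_k .
\]
This can be justified by difference quotients, since $u\in C^2$ makes the quotients converge. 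The coefficients $a^{ij}(\nabla u)$ are $C^{1}$, in particular $C^{0,\alpha}$, and the right-hand side is continuous. Schauder theory for divergence-form equations then gives $w\in C^{1,\alpha}_{\mathrm{loc}}$, so $u\in C^{2,\alpha}_{\mathrm{loc}}$. Alternatively, one can apply nondivergence Schauder estimates directly: with $u\in C^{2}$, the coefficients $a^{ij}(\nabla u)$ are $C^1$ and $u^p\in C^1$, so $u\in C^{2,\alpha}_{\mathrm{loc}}$. From there the induction is standard. If $u\in C^{k,\alpha}_{\mathrm{loc}}$ with $k\geqslant2$, then $a^{ij}(\nabla u)\in C^{k-1,\alpha}$ and $u^p\in C^{k,\alpha}$ (using $u>0$), so Schauder estimates give $u\in C^{k+1,\alpha}_{\mathrm{loc}}$. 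Hence $u\in C^\infty$.

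The main obstacle I expect is the local uniform ellipticity, which fails only if $|\nabla u|\to1$. The pointwise strict inequality together with continuity and compactness handles this. The other delicate point is that $u^p$ is not smooth at $u=0$ when $p$ is not an integer, which is why positivity must be established first.
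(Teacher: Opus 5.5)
Your proposal is correct and follows essentially the same route as the paper. Both rewrite the equation in nondivergence form, get local uniform ellipticity from $\max_K|\nabla u|<1$ on compact sets, apply the strong maximum principle to get $u>0$, and then run a Schauder bootstrap once positivity makes $t\mapsto t^p$ smooth on the range of $u$. The paper divides the coefficients by $W$ and writes $a^{ij}=\delta_{ij}+W^2u_iu_j$; your extra details (the open--closed argument on balls and the explicit inductive Schauder step) simply flesh out what the paper states briefly.
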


\begin{proof}
Equation \eqref{eq:main-equation} can be written in nondivergence form as
\[
a^{ij}u_{ij}=-W^{-1}u^p\leqslant0,
\qquad
a^{ij}=\delta_{ij}+W^2u_i u_j.
\]
On every bounded domain $\Omega\Subset\mathbb R^n$, strict spacelikeness and
continuity give $\sup_{\overline\Omega}|\nabla u|<1$, so
$a^{ij}\partial_{ij}$ is uniformly elliptic on $\Omega$. The strong maximum
principle \cite[Theorem~3.5]{GT} implies that a nontrivial nonnegative solution
is positive. It is therefore bounded away from zero on compact subsets. The
coefficient map $\xi\mapsto\xi/\sqrt{1-|\xi|^2}$ and the nonlinearity
$t\mapsto t^p$ are smooth on the respective local ranges of $\nabla u$ and
$u$. Standard interior Schauder bootstrapping for quasilinear elliptic
equations now yields $u\in C^\infty(\mathbb R^n)$.
\end{proof}

In the remainder of the paper, the trivial solution requires no further
consideration, and Lemma~\ref{lem:regularity-reduction} allows us to work with
a positive smooth solution.

Let $M=\{(x,u(x))\mid x\in\mathbb R^n\}$ denote the graph of $u$ in
Lorentz--Minkowski space $\mathbb R^{n,1}$, and let $F_i=(e_i,u_i)$ be the
coordinate tangent vector fields. With respect to the graph coordinates, the
induced metric and its inverse are
$\bar g_{ij}=\delta_{ij}-u_i u_j$ and
$\bar g^{ij}=\delta_{ij}+W^2u_i u_j$, respectively. We denote the
Levi--Civita connection of $\bar g$ by $\bar\nabla$. For a scalar function
$v$, $\bar\nabla v$ denotes its gradient with respect to $\bar g$, and
$\bar\nabla_i v=v_i$; thus
$(\bar\nabla v)^i=\bar g^{ij}v_j$,
$\bar\nabla_i\bar\nabla_jv=v_{ij}-\bar\Gamma_{ij}^{k}v_k$, and
$\Delta_{\bar g}v=\bar g^{ij}\bar\nabla_i\bar\nabla_jv$. Unless stated
otherwise, all tensor norms and inner products in this section are taken with
respect to $\bar g$.

With respect to the future-directed unit normal $\nu=W(\nabla u,1)$, define
\[
h_{ij}=-\langle\mathrm D_{F_i}F_j,\nu\rangle_{\mathbb R^{n,1}}.
\]
The second fundamental form and its trace are
\[
h_{ij}=Wu_{ij},
\qquad
H=\bar g^{ij}h_{ij}=\operatorname{div}(W\nabla u)=-u^p.
\]

We next record the identities used in the Bernstein argument. From
\[
\partial_i\bar g_{j\ell}+\partial_j\bar g_{i\ell}
-\partial_\ell\bar g_{ij}=-2u_{ij}u_\ell
\]
and $\bar g^{k\ell}u_\ell=W^2u_k$, we obtain
\begin{equation}\label{eq:christoffel}
\bar\Gamma_{ij}^{k}=-W^2u_k u_{ij}.
\end{equation}
Consequently,
\begin{align}
\begin{split}\label{eq:graph-hessian}
\bar\nabla_i\bar\nabla_j u
&=u_{ij}-\bar\Gamma_{ij}^{k}u_k=W^2u_{ij}=Wh_{ij},\\
(\bar\nabla u)^i&=\bar g^{ij}u_j=W^2u_i,\\
|\bar\nabla u|_{\bar g}^2
&=\bar g^{ij}u_i u_j=W^2|\nabla u|^2=W^2-1.
\end{split}
\end{align}
A direct computation gives $W_i=W^3u_k u_{ki}=h_{ij}\bar\nabla^ju$, and hence
\begin{equation}\label{eq:gradient-W}
\bar\nabla_iW=h_{ij}\bar\nabla^ju.
\end{equation}

Since the ambient Lorentz--Minkowski space is flat, the Codazzi identity gives
\begin{equation}\label{eq:codazzi}
\bar\nabla^ih_{ij}=\bar\nabla_jH.
\end{equation}
Using \eqref{eq:graph-hessian}--\eqref{eq:codazzi} and $H=-u^p$, we obtain
\begin{align}
\begin{split}\label{eq:laplacian-W}
\Delta_{\bar g}W
&=\bar\nabla^i\bigl(h_{ij}\bar\nabla^ju\bigr)
=(\bar\nabla^ih_{ij})\bar\nabla^ju
+h_{ij}\bar\nabla^i\bar\nabla^ju\\
&=\langle\bar\nabla H,\bar\nabla u\rangle_{\bar g}
+W|h|_{\bar g}^2
=W|h|_{\bar g}^2-pu^{p-1}(W^2-1).
\end{split}
\end{align}
Taking the trace of the first identity in \eqref{eq:graph-hessian} yields
\begin{equation}\label{eq:laplacian-u}
\Delta_{\bar g}u=WH=-Wu^p.
\end{equation}
From \eqref{eq:gradient-W} and \eqref{eq:laplacian-W}, we obtain
\begin{equation}\label{eq:laplacian-log-W}
\Delta_{\bar g}\log W
=|h|_{\bar g}^2
-\frac{|h(\bar\nabla u,\cdot)|_{\bar g}^2}{W^2}
-\frac{pu^{p-1}(W^2-1)}{W}.
\end{equation}

Let $x_k$ denote the restriction to $M$ of the $k$-th Euclidean coordinate
function. Formula \eqref{eq:christoffel} gives
\begin{equation}\label{eq:coordinate-identities}
\Delta_{\bar g}x_k=WHu_k,
\qquad
\sum_{k=1}^n|\bar\nabla x_k|_{\bar g}^2
=\sum_{k=1}^n\bar g^{kk}=n+W^2-1.
\end{equation}

\section{A global Bernstein estimate}\label{sec:Bernstein}

Throughout this section, we use the notation introduced in Section~\ref{sec:geometric}.

\begin{theorem}\label{thm:bernstein}
    Let $n\geqslant2$ and $p\geqslant1$, and let $u$ be a nonnegative, entire, strictly spacelike solution as in Section~\ref{sec:geometric}. Then there exists a constant $C=C(n,p)>0$ such that
    \[
W(x)\leqslant C(1+u(x))^{p+1},\quad x\in\mathbb R^n.
\]
\end{theorem}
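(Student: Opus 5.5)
We prove Theorem~\ref{thm:bernstein} by a maximum-principle argument on the graph $M$ with its induced metric $\bar g$, in the style of Cheng--Yau and Bartnik--Simon gradient estimates. The test function couples $W$ with a weight in $u$ and a Lorentzian cutoff. Fix a point $x_0$, set $u_0=u(x_0)$, and consider the ``Lorentzian distance'' function
\[
\rho(x)=|x-x_0|^2-(u(x)-u_0-a)^2 .
\]
Here $a>0$ is chosen comparable to $1+u_0$. On $M$ we work in the region $\Omega=\{\rho<0,\ u-u_0-a<0\}$, which is compact by spacelikeness; this is the standard Lorentzian cutoff adapted to both the horizontal and vertical variables.

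\textbf{The test function.} We study
\[
\Phi=\eta^{\beta}\,\frac{W}{(1+u)^{p+1}},\qquad \eta=-\rho .
\]
Since $\eta$ vanishes on $\partial\Omega$, $\Phi$ attains an interior maximum. At that point $\bar\nabla\log\Phi=0$ and $\Delta_{\bar g}\log\Phi\le0$.

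\textbf{Computing $\Delta_{\bar g}\log\Phi$.} We expand it using \eqref{eq:laplacian-log-W}, \eqref{eq:laplacian-u}, \eqref{eq:coordinate-identities} and $|\bar\nabla u|^2=W^2-1$. The good term is $|h|^2-|h(\bar\nabla u,\cdot)|^2/W^2\ge \frac{1}{W^2}|h(\bar\nabla u,\cdot)|^2$ up to dimensional factors, the usual Kato-type gain. Through \eqref{eq:gradient-W} it controls $|\bar\nabla W|^2/W^2$.

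The weight $(1+u)^{-(p+1)}$ is chosen to handle the bad zeroth-order term $-pu^{p-1}(W^2-1)/W$. It contributes
\[
-(p+1)\frac{\Delta_{\bar g}u}{1+u}+(p+1)\frac{|\bar\nabla u|^2}{(1+u)^2}
=(p+1)\frac{Wu^p}{1+u}+(p+1)\frac{W^2-1}{(1+u)^2}.
\]
The positive term $(p+1)Wu^p/(1+u)$ dominates $pu^{p-1}W$ once we also use the critical-point relation. At the critical point, $\bar\nabla W/W=(p+1)\bar\nabla u/(1+u)-\beta\bar\nabla\eta/\eta$, so cross terms are absorbed via Cauchy--Schwarz into the Kato gain and into $(W^2-1)/(1+u)^2$.

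For the cutoff we have
\[
\Delta_{\bar g}\rho=2\sum_k\bigl(|\bar\nabla x_k|^2+(x_k-x_{0,k})\Delta_{\bar g}x_k\bigr)-2|\bar\nabla u|^2-2(u-u_0-a)\Delta_{\bar g}u .
\]
Using \eqref{eq:coordinate-identities}, the $W^2$ terms cancel and leave $2n$. The remaining terms are $O(W u^p(|x-x_0|+a))$. On $\Omega$, $|x-x_0|\le a-(u-u_0)\le a+u_0$, so these are bounded by $C(1+u_0)Wu^p$. The gradient term $|\bar\nabla\rho|^2$ is of size $\eta\, W^2$ plus lower order, by the usual Lorentzian identity for $\rho$.

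\textbf{Concluding the estimate.} Multiplying the inequality $\Delta_{\bar g}\log\Phi\le0$ by $\eta^2$, we obtain a quadratic inequality in $W$ at the maximum point, giving $\eta^{\beta}W\le C(1+u)^{p+1}(\ldots)$. Evaluating at $x_0$, where $\eta=a^2$ and $u\le u_0+a$ on $\Omega$, then yields $W(x_0)\le C(1+u_0)^{p+1}$ once $a\sim 1+u_0$ is chosen to balance the scales.

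\textbf{Main obstacle.} The key step is making the scaling consistent. The cutoff terms scale like $(1+u_0)Wu^p/\eta$ and $W^2/\eta$, and they must be absorbed by the positive terms $Wu^p/(1+u)$ and $W^2/(1+u)^2$ without losing the exponent $p+1$. We would first try $\beta=1$ or $2$ with $a=1+u_0$. If the bad term $pu^{p-1}W^2/W$ cannot be closed this way, we would strengthen the weight to $(1+u)^{-(p+1)}$ composed with a slightly convex function of $u$, for example $e^{\varepsilon u/(1+u_0)}$, so that the positive term $W^2/(1+u)^2$ has a favorable constant.
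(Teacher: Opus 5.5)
Your overall architecture is the right one: a maximum principle for $\eta^\beta W(1+u)^{-(p+1)}$ on the graph with a Lorentzian-cone cutoff. However, the step that carries the estimate, the cutoff-gradient term $\beta|\bar\nabla\eta|_{\bar g}^2/\eta^2$, is mishandled, and as written the argument does not close.

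Your size claim is incorrect. Writing $\tau=u-u_0$ and $z=x-x_0$, one has exactly
\[
|\bar\nabla\eta|_{\bar g}^2=4W^2\bigl(a-\tau+z\cdot\nabla u\bigr)^2-4\eta .
\]
Near $\partial\Omega$ this is of order $W^2a^2$, not $\eta W^2$. More to the point, whatever its precise size, this term is quadratic in $W$, exactly like the only zeroth-order good term $(p+1)(W^2-1)/(1+u)^2$. Because the induced metric degenerates as $W\to\infty$, there is no degree gap of the kind used in Euclidean Cheng--Yau arguments. Multiplying $\Delta_{\bar g}\log\Phi\leqslant0$ by $\eta^2$ gives something like $\eta^2W^2(1+u)^{-2}\lesssim\beta a^2W^2+\cdots$, with $W^2$ on both sides. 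That says nothing wherever $\eta$ is small compared with $a(1+u)$. So the obstacle you single out (absorbing $W^2/\eta$ into $W^2/(1+u)^2$) cannot be overcome by tuning $a$, by taking $\beta\in\{1,2\}$, or by inserting a factor $e^{\varepsilon u/(1+u_0)}$. You do invoke the critical-point relation, but only for unspecified ``cross terms''; it is precisely this term that needs it.

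The missing step is to use the critical-point relation at the maximum:
\[
\frac{\beta|\bar\nabla\eta|^2}{\eta^2}=\beta^{-1}\bigl|\bar\nabla\log W-(p+1)\bar\nabla\log(1+u)\bigr|^2\leqslant\frac2\beta\frac{|\bar\nabla W|^2}{W^2}+\frac{2(p+1)^2}{\beta}\frac{W^2-1}{(1+u)^2}.
\]
The first piece must then be absorbed by curvature, and here your Kato-type claim also needs repair. In a frame with $e_1\parallel\bar\nabla u$,
\[
|h|^2-\frac{|\bar\nabla W|^2}{W^2}=\frac{h_{11}^2}{W^2}+\Bigl(1+\frac1{W^2}\Bigr)\sum_{\alpha\geqslant2}h_{1\alpha}^2+\sum_{\alpha,\gamma\geqslant2}h_{\alpha\gamma}^2 .
\]
So $h_{11}$ enters only with the degenerate weight $W^{-2}$, whereas $|\bar\nabla W|^2/W^2=(1-W^{-2})\sum_jh_{1j}^2$. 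The gain appears only after you use the equation through the trace, $\sum_{\alpha\geqslant2}h_{\alpha\alpha}=-u^p-h_{11}$. This gives $\sum h_{\alpha\gamma}^2\geqslant(h_{11}+u^p)^2/(n-1)$, up to a $-Cu^{2p}$ error. It forces $\beta$ to be large in terms of $n$ and $p$ (e.g.\ $\beta\geqslant\max\{4(n-1),8(p+1)\}$), which is the role of $\beta>n-1$ and the large-$\beta$ choice in the paper.

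With this repair your cutoff actually works well. Since $a-\tau>|z|$ on $\Omega$, you get $\Delta_{\bar g}\eta=-2n+2Wu^p(a-\tau+z\cdot\nabla u)\geqslant-2n$ there. Hence at the maximum $\frac{p+1}2\frac{W^2-1}{(1+u)^2}\leqslant C(1+u^{2p})+\frac{2n\beta}{\eta}$, and the choice $a=1+u_0$ finishes the proof. This even avoids the paper's exact identity for the sign-indefinite $H$-term in $\Delta_{\bar g}\Theta$.

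Two smaller corrections:
\begin{itemize}
\item $(p+1)Wu^p/(1+u)$ does not dominate $pu^{p-1}W$ when $u<p$. You need Young's inequality against $(p+1)(W^2-1)/(1+u)^2$, and that is where $p\geqslant1$ enters.
\item Boundedness of $\Omega$ comes from $u\geqslant0$, not from spacelikeness alone.
\end{itemize}
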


\begin{proof}
    Fix $x_0\in\mathbb R^n$ and set $R=1+u(x_0)$, $\tau=u-u(x_0)$, and $z=x-x_0$. Since $u\geqslant0$, we have $R\geqslant1$. For $z\neq0$, the strict spacelike condition gives
    \[
|\tau(x)|=\Big|\int_0^1\nabla u(x_0+tz)\cdot z\,\mathrm dt\Big|\leqslant|z|\int_0^1|\nabla u(x_0+tz)|\,\mathrm dt<|z|.
\]
    Thus, the Lorentzian squared distance $\rho^2:=|z|^2-\tau^2$ is nonnegative and is strictly positive away from $x_0$.

    We first record some elementary properties of the cutoff
    \begin{equation}\label{eq:theta-def}
        \Theta=1-\frac{\rho^2}{R^2}-\frac{\tau}{R}.
    \end{equation}
    Since $\Theta(x_0)=1$, the set $\{\Theta>0\}$ is nonempty. If $\Theta(x)>0$, then $0\leqslant\rho^2<R^2-R\tau$. In particular, $R^2-R\tau>0$, and hence $\tau<R$. On the other hand, the nonnegativity of $u$ gives
    \[
\tau=u-u(x_0)\geqslant-u(x_0)=1-R>-R.
\]
    Consequently, $|z|^2=\rho^2+\tau^2<R^2-R\tau+\tau^2<3R^2$, since $\tau\in(-R,R)$ and the quadratic $s^2-Rs+R^2$ attains its maximum value $3R^2$ on $[-R,R]$. Furthermore, $u=R-1+\tau<2R-1<2R$ and $0<\Theta\leqslant1-\frac{\tau}{R}<2$ by $\rho^2\geqslant0$. Thus, in the region $\Omega:=\{\Theta>0\}$,
    \begin{equation}\label{eq:theta-region}
        \Omega\subset B_{\sqrt3R}(x_0),\quad -R<\tau<R,\quad 0\leqslant u<2R,\quad 0<\Theta<2.
    \end{equation}
    In particular, $\Omega$ is a bounded open set containing $x_0$, and continuity implies that $\Theta=0$ on $\partial\Omega$.

    Taking the $\bar g$-gradient of \eqref{eq:theta-def} yields
    \begin{equation}\label{eq:gradient-theta}
        \bar\nabla\Theta=-\frac{2}{R^2}\sum_{k=1}^n z_k\bar\nabla x_k+\big(\frac{2\tau}{R^2}-\frac 1 R\big)\bar\nabla u.
    \end{equation}

    Consider $G=\Theta^\beta\frac{W}{(1+u)^{p+1}}$, where $\beta>n-1$ will be chosen below. Because $\overline\Omega$ is compact and $u$ is strictly spacelike, $W$ is bounded on $\overline\Omega$. Since $1+u\geqslant1$, the function $G$ extends continuously to $\overline\Omega$ by setting $G=0$ on $\partial\Omega$. Moreover, $G(x_0)=\frac{W(x_0)}{(1+u(x_0))^{p+1}}>0$. It follows that $G$ attains a positive maximum at an interior point $x_*\in\Omega$. All quantities below are evaluated at $x_*$. Since $\log G=\beta\log\Theta+\log W-(p+1)\log(1+u)$, the first-order condition is
    \begin{equation}\label{eq:maximum-first-order}
        \frac{\bar\nabla W}{W}=\frac{p+1}{1+u}\bar\nabla u-\frac{\beta}{\Theta}\bar\nabla\Theta.
    \end{equation}
    If $W(x_*)=1$, then $G(x_*)\leqslant 2^\beta$, and the desired estimate follows immediately. We may therefore assume that $W(x_*)>1$ and choose a $\bar g$-orthonormal frame such that $e_1=\frac{\bar\nabla u}{\sqrt{W^2-1}}$. Define
    $\lambda=-\frac{\beta}{\Theta}
    \frac{\langle\bar\nabla\Theta,\bar\nabla u\rangle_{\bar g}}{W^2-1}$. Using \eqref{eq:gradient-W}, the components of \eqref{eq:maximum-first-order} give
    \begin{equation}\label{eq:h11-at-maximum}
        h_{11}=W\big(\frac{p+1}{1+u}+\lambda\big),
    \end{equation}
    \begin{equation}\label{eq:maximum-components}
        -\frac{\beta}{\Theta}\bar\nabla_1\Theta=\sqrt{W^2-1}\,\lambda,\quad-\frac{\beta}{\Theta}\bar\nabla_\alpha\Theta=\frac{\sqrt{W^2-1}}{W}h_{1\alpha},\quad 2\leqslant\alpha\leqslant n.
    \end{equation}
    At the maximum point $x_*$, equation \eqref{eq:laplacian-log-W} and the definition of $\log G$ give
    \begin{align}
        \begin{split}\label{eq:maximum-laplacian}
            0\geqslant\Delta_{\bar g}\log G
            =\,&\frac{\beta}{\Theta}\Delta_{\bar g}\Theta
            -\frac{\beta}{\Theta^2}|\bar\nabla\Theta|_{\bar g}^2+|h|_{\bar g}^2
            -\frac{|h(\bar\nabla u,\cdot)|_{\bar g}^2}{W^2}\\
            &-\frac{pu^{p-1}(W^2-1)}{W}
            +\frac{(p+1)Wu^p}{1+u}
            +\frac{(p+1)(W^2-1)}{(1+u)^2}.
        \end{split}
    \end{align}
    The last two terms follow from
    \[
\Delta_{\bar g}\big(-(p+1)\log(1+u)\big)=-\frac{p+1}{1+u}\Delta_{\bar g}u+\frac{p+1}{(1+u)^2}|\bar\nabla u|_{\bar g}^2,
\]
    together with $\Delta_{\bar g}u=-Wu^p$ and $|\bar\nabla u|_{\bar g}^2=W^2-1$. We now estimate the terms in \eqref{eq:maximum-laplacian} separately.

    \paragraph{The cutoff-Laplacian term.}
    By \eqref{eq:laplacian-u} and \eqref{eq:coordinate-identities},
    $\Delta_{\bar g}\tau=W H$, and hence
    \[
\Delta_{\bar g}\rho^2=2n+2W H(z\cdot\nabla u-\tau),
\]
    \begin{equation}\label{eq:laplacian-theta}
        \Delta_{\bar g}\Theta=-\frac{2n}{R^2}-\frac{2W H}{R^2}(z\cdot\nabla u-\tau)-\frac{W H}{R}.
    \end{equation}
    To relate the last two terms to the identity at the maximum point, observe that
    \[
\langle\bar\nabla x_k,\bar\nabla u\rangle_{\bar g}=\bar g^{kj}u_j=W^2u_k,\quad|\bar\nabla u|_{\bar g}^2=W^2-1.
\]
    Using \eqref{eq:gradient-theta}, we therefore obtain
    \[
\langle\bar\nabla\Theta,\bar\nabla u\rangle_{\bar g}=-\frac{2}{R^2}\big(W^2z\cdot\nabla u-\tau(W^2-1)\big)-\frac{W^2-1}{R}.
\]
    The definition of $\lambda$ now gives
    \[
\frac{\Theta(W^2-1)}{\beta}\lambda=\frac{2}{R^2}\big(W^2z\cdot\nabla u-\tau(W^2-1)\big)+\frac{W^2-1}{R}.
\]
    Adding $R^{-1}-2\tau R^{-2}$ to the right-hand side and dividing by $W^2$ yields the exact identity
    \[
\frac{2}{R^2}(z\cdot\nabla u-\tau)+\frac 1 R=\frac1{W^2}\big(\frac{\Theta(W^2-1)}{\beta}\lambda+\frac 1 R-\frac{2\tau}{R^2}\big).
\]
    Substituting this identity together with $H=-u^p$ into \eqref{eq:laplacian-theta}, we obtain
    \begin{equation}\label{eq:laplacian-theta-exact}
        \frac{\beta}{\Theta}\Delta_{\bar g}\Theta=\frac{(W^2-1)u^p}{W}\lambda-\frac{2n\beta}{R^2\Theta}+\frac{\beta u^p}{W\Theta}
        \big(\frac 1 R-\frac{2\tau}{R^2}\big).
    \end{equation}
    By \eqref{eq:theta-region}, $\big|\frac 1 R-\frac{2\tau}{R^2}\big|\leqslant\frac 3 R$. Since $R\geqslant1$, $W\geqslant1$, and $0<\Theta<2$, we have $\frac{1}{R^2\Theta}\leqslant\frac{2}{\Theta^2}$, $\frac{u^p}{RW\Theta}\leqslant\frac{C(1+u^{2p})}{\Theta^2}$. Consequently, \eqref{eq:laplacian-theta-exact} implies
    \begin{equation}\label{eq:laplacian-theta-lower}
        \frac{\beta}{\Theta}\Delta_{\bar g}\Theta\geqslant\frac{(W^2-1)u^p}{W}\lambda-\frac{C(1+u^{2p})}{\Theta^2}.
    \end{equation}

    \paragraph{The cutoff-gradient term.}
    The component identities \eqref{eq:maximum-components} give

    \begin{equation}\label{eq:gradient-theta-square}
        \frac{\beta}{\Theta^2}|\bar\nabla \Theta|_{\bar g}^2=\frac{W^2-1}{\beta}\lambda^2+\frac{W^2-1}{\beta W^2}\sum_{\alpha=2}^n h_{1\alpha}^2.
    \end{equation}

    \paragraph{Curvature terms.}
    For the third group in \eqref{eq:maximum-laplacian}, the trace identity
    $\sum_{\alpha=2}^n h_{\alpha\alpha}=-u^p-h_{11}$ and Cauchy's inequality
    imply
    \begin{equation}\label{eq:transverse-curvature}
        \sum_{\alpha,\gamma=2}^n h_{\alpha\gamma}^2\geqslant\frac{(u^p+h_{11})^2}{n-1}.
    \end{equation}
    Moreover, in the chosen orthonormal frame,
    \[
|h|_{\bar g}^2-\frac{|h(\bar\nabla u,\cdot)|_{\bar g}^2}{W^2}=\frac{h_{11}^2}{W^2}+\big(1+\frac1{W^2}\big)\sum_{\alpha=2}^n h_{1\alpha}^2+\sum_{\alpha,\gamma=2}^n h_{\alpha\gamma}^2.
\]
    Combining this identity with \eqref{eq:h11-at-maximum}, \eqref{eq:gradient-theta-square}, and \eqref{eq:transverse-curvature}, and retaining the principal $\lambda$-term from \eqref{eq:laplacian-theta-lower}, we obtain
    \begin{align}
        \begin{split}\label{eq:curvature-lower}
            &|h|_{\bar g}^2-\frac{|h(\bar\nabla u,\cdot)|_{\bar g}^2}{W^2}-\frac{\beta}{\Theta^2}|\bar\nabla \Theta|_{\bar g}^2+\frac{(W^2-1)u^p}{W}\lambda\\
            \geqslant\,&\big(\frac{p+1}{1+u}+\lambda\big)^2
            +\frac{1}{n-1}\big[u^p+W\big(\frac{p+1}{1+u}+\lambda\big)\big]^2\\
            &-\frac{W^2-1}{\beta}\lambda^2
            +\frac{(W^2-1)u^p}{W}\lambda.
        \end{split}
    \end{align}
    Indeed, the omitted coefficient of $\sum_{\alpha=2}^n h_{1\alpha}^2$ is $1+\frac1{W^2}-\frac{W^2-1}{\beta W^2}>0$ whenever $\beta>1$.

    Let $\mathcal Q$ denote the expression on the right-hand side of \eqref{eq:curvature-lower}, and let
    \[
\mathcal R=\frac{(p+1)Wu^p}{1+u}-\frac{pu^{p-1}(W^2-1)}{W}+\frac{(p+1)(W^2-1)}{(1+u)^2}.
\]
    Substituting \eqref{eq:laplacian-theta-lower} into \eqref{eq:maximum-laplacian}, using the exact identity \eqref{eq:gradient-theta-square}, and then applying \eqref{eq:curvature-lower}, gives the intermediate estimate
    \begin{equation}\label{eq:QR-reduction}
        \Delta_{\bar g}\log G\geqslant\mathcal Q+\mathcal R-\frac{C(1+u^{2p})}{\Theta^2}.
    \end{equation}
    Thus it remains to estimate $\mathcal Q$ and $\mathcal R$ separately.

    \paragraph{The quadratic part $\mathcal Q$.}
    Set $a=u^p$, $b=\frac{p+1}{1+u}$, $k=n-1$,
    $P=1+\frac{W^2}{k}$, $D=\frac{W^2-1}{\beta}$, and $A=P-D$.
    Thus $A$ is precisely the coefficient of $\lambda^2$. If $\beta>k$,
    then
    \begin{equation}\label{eq:A-coercive}
        A=1+\frac1\beta+W^2\big(\frac1k-\frac1\beta\big)\geqslant c_{n,\beta}(1+W^2).
    \end{equation}
    The quantity $\mathcal Q$ can be written as
    \[
\mathcal Q=P(b+\lambda)^2-D\lambda^2+\frac{2aW}{k}(b+\lambda)+\frac{a^2}{k}+a\frac{W^2-1}{W}\lambda.
\]
    Define $\mu=\lambda+\frac{Pb}{A}$. Since $b+\lambda=\mu-Db/A$ and $\lambda=\mu-Pb/A$, direct expansion gives
    \begin{equation}\label{eq:Q-expanded}
        \mathcal Q=A\mu^2-\frac{PD}{A}b^2+a\big(\frac{2W}{k}+\frac{W^2-1}{W}\big)\mu+\frac{a^2}{k}-\frac{ab}{A}\big(\frac{2WD}{k}+\frac{W^2-1}{W}P\big).
    \end{equation}
    By \eqref{eq:A-coercive}, $\frac 1 A\big(\frac{2W}{k}+\frac{W^2-1}{W}\big)^2\leqslant C(n,\beta)$ and $\frac 1 A\big(\frac{2WD}{k}+\frac{W^2-1}{W}P\big)\leqslant C(n,\beta)W$. Completing the square with respect to $\mu$ in \eqref{eq:Q-expanded} therefore yields
    \begin{equation}\label{eq:Q-first-lower}
        \mathcal Q\geqslant-\frac{b^2\big(1+\frac{W^2}{n-1}\big)(W^2-1)}{\beta A}-Cbu^pW-Cu^{2p}.
    \end{equation}
    The first term on the right satisfies
    \begin{equation}\label{eq:b-term-bound}
        \frac{b^2\big(1+\frac{W^2}{n-1}\big)(W^2-1)}{\beta A}\leqslant\frac{(p+1)^2}{\beta-(n-1)}\frac{W^2-1}{(1+u)^2}.
    \end{equation}
    Indeed, with $k=n-1$ one has $\frac{P}{\beta A}=\frac{k+W^2}{k(\beta+1)+(\beta-k)W^2}\leqslant\frac1{\beta-k}$. Choose $\beta$ so large that
    \begin{equation}\label{eq:beta-choice}
        \frac{(p+1)^2}{\beta-(n-1)}\leqslant\frac{p+1}{8}.
    \end{equation}
    Moreover, since $W\leqslant1+\sqrt{W^2-1}$, Young's inequality gives $\frac{u^pW}{1+u}\leqslant\delta\frac{W^2-1}{(1+u)^2}+C_\delta(1+u^{2p})$. Taking $\delta$ sufficiently small, inequalities \eqref{eq:Q-first-lower}--\eqref{eq:beta-choice} imply
    \begin{align}
        \begin{split}\label{eq:Q-final-lower}
            &\big(\frac{p+1}{1+u}+\lambda\big)^2+\frac{1}{n-1}\big[u^p+W\big(\frac{p+1}{1+u}+\lambda\big)\big]^2-\frac{W^2-1}{\beta}\lambda^2+\frac{(W^2-1)u^p}{W}\lambda\\
            \geqslant\,&-\frac{p+1}{4}\frac{W^2-1}{(1+u)^2}-C(1+u^{2p}).
        \end{split}
    \end{align}

    \paragraph{The reaction terms.}
    It remains to estimate
    \[
\mathcal R=\frac{(p+1)Wu^p}{1+u}-\frac{pu^{p-1}(W^2-1)}{W}+\frac{(p+1)(W^2-1)}{(1+u)^2}.
\]
    The first term is nonnegative, while the third is exactly $\frac{(p+1)(W^2-1)}{(1+u)^2}$. It remains to control the negative middle term. Since $\frac{W^2-1}{W}\leqslant\sqrt{W^2-1}$, we write
    \[
pu^{p-1}\sqrt{W^2-1}=p u^{p-1}(1+u)\frac{\sqrt{W^2-1}}{1+u}.
\]
    This is where the assumption $p\geqslant1$ is used: $u^{2p-2}(1+u)^2\leqslant C(1+u^{2p})$ for every $u\geqslant0$. Young's inequality therefore yields $\frac{pu^{p-1}(W^2-1)}{W}\leqslant\frac{p+1}{4}\frac{W^2-1}{(1+u)^2}+C(1+u^{2p})$. Substituting this estimate into $\mathcal R$ gives
    \begin{equation}\label{eq:R-lower}
        \mathcal R\geqslant\frac{3(p+1)}4\frac{W^2-1}{(1+u)^2}-C(1+u^{2p}).
    \end{equation}
    Combining \eqref{eq:QR-reduction}, \eqref{eq:Q-final-lower}, and \eqref{eq:R-lower}, we find
    \[
\Delta_{\bar g}\log G\geqslant\frac{p+1}{2}\frac{W^2-1}{(1+u)^2}-C(1+u^{2p})-\frac{C(1+u^{2p})}{\Theta^2}.
\]
    Since $0<\Theta<2$ and $\Delta_{\bar g}\log G(x_*)\leqslant0$, we obtain $0\geqslant\frac{p+1}{2}\frac{W^2-1}{(1+u)^2}-\frac{C(1+u^{2p})}{\Theta^2}$. Therefore, $\Theta^2\frac{W^2-1}{(1+u)^2}\leqslant C(1+u^{2p})$. Since $W\leqslant1+\sqrt{W^2-1}$ and $0<\Theta<2$, it follows that $\frac{\Theta W}{1+u}\leqslant C(1+u^p)$. At the maximum point,
    \[
G(x_*)=\Theta^{\beta-1}\frac{\Theta W}{1+u}\frac1{(1+u)^p}\leqslant C\Theta^{\beta-1}\frac{1+u^p}{(1+u)^p}\leqslant C.
\]
    Since $x_*$ is a maximizer and $\Theta(x_0)=1$, we have
    \[
\frac{W(x_0)}{(1+u(x_0))^{p+1}}=G(x_0)\leqslant G(x_*)\leqslant C.
\]
    Because $x_0\in\mathbb R^n$ was arbitrary, the theorem follows.
\end{proof}

\section{A uniform height bound}\label{sec:bound}

\begin{lemma}\label{lem:height-bound}
    Let $n\geqslant2$ and $p\geqslant1$, and let $u$ be a nonnegative, entire, strictly spacelike solution as in Section~\ref{sec:geometric}. Then there exists a constant $C=C(n,p)>0$ such that $\sup_{\mathbb R^n}u\leqslant C$.
\end{lemma}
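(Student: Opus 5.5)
The plan is to argue by contradiction. Suppose $M:=u(x_0)$ is very large at some point $x_0$. We would compare $u$ on a large ball around $x_0$ with an explicit concave hyperbolic cap whose mean curvature matches the lower bound on the reaction term, and then use Theorem~\ref{thm:bernstein} to rule out the near-lightlike behaviour the comparison forces.

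\emph{Step 1: a lower bound on $u$ near $x_0$.} By strict spacelikeness, $u\geqslant M-|x-x_0|$. Hence $u\geqslant M/2$ on $B_{M/2}(x_0)$, and there
\[
\operatorname{div}(W\nabla u)=-u^p\leqslant-(M/2)^p .
\]

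\emph{Step 2: the model cap.} For $r>0$ set
\[
w(x)=a-\sqrt{r^2+|x-x_0|^2}.
\]
This is an entire spacelike graph (a hyperboloid) with $\operatorname{div}(W_w\nabla w)=-n/r$. Choose $r=n(2/M)^p$, so that $\operatorname{div}(W\nabla u)\leqslant\operatorname{div}(W_w\nabla w)$ on $B_\rho(x_0)$ for every $\rho\leqslant M/2$. Both graphs are uniformly spacelike on the compact ball, so the quasilinear comparison principle \cite{GT} applies to the difference. It says that $u-w$ attains its minimum over $\overline{B_\rho(x_0)}$ on $\partial B_\rho(x_0)$. Evaluating at the center then gives
\[
u(x_0)-\min_{\partial B_\rho(x_0)}u\geqslant\sqrt{r^2+\rho^2}-r\geqslant\rho-r .
\]
Thus along some ray $u$ drops by almost exactly $\rho$ over distance $\rho$: its mean slope is at least $1-r/\rho$, with $r\sim M^{-p}$ tiny.

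\emph{Step 3: contradiction with the Bernstein bound.} Theorem~\ref{thm:bernstein} gives $W\leqslant C(1+u)^{p+1}$. Since $u<2M$ on the ball, this yields
\[
|\nabla u|\leqslant 1-cM^{-2p-2}\quad\text{on }B_\rho(x_0),
\]
so the mean slope along any segment is at most $1-cM^{-2p-2}$. Comparing the two slope bounds gives a contradiction once $\rho\,cM^{-2p-2}>r\sim M^{-p}$, that is once $\rho\gtrsim M^{p+2}$.

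\emph{Main obstacle.} Step 3 needs $\rho\gtrsim M^{p+2}$, but the ball on which Step 1 guarantees $u\gtrsim M$ only has radius $\sim M$, so a single ball does not close the argument. I would first try to chain the comparison. Starting from $x_0$, move to the minimizing boundary point $y_1$, which still has large height $u(y_1)\geqslant M/2$, and repeat with balls adapted to the current height $s$. Each step loses height almost exactly equal to its radius, while its slope deficit relative to $1$ is $\gtrsim s^{-2p-2}$ against a cap error $\sim s^{-p}$. I would sum these deficits along the chain, or replace the ball-by-ball argument with a single comparison on the domain $\{u>M/2\}$, which would make the effective $\rho$ large.

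If chaining fails, the fallback is to tune the cap differently. I would use the spacelike cap with $\operatorname{div}(W_w\nabla w)=-m^p$ for a fixed threshold $m$, choose $a$ so that $w$ touches $u$ from below at the top, and exploit that $u\geqslant m$ on the component of $\{u>m\}$ containing $x_0$. The aim is to force that component to contain a lightlike-ish segment of length comparable to $M$, which Theorem~\ref{thm:bernstein} forbids unless $M\leqslant C(n,p)$. The delicate point throughout is getting the dependence on $M$ in the comparison to beat the polynomial loss $M^{-2p-2}$ coming from the Bernstein estimate.
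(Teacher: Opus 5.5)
Your argument stops at the obstacle you identify, and neither proposed repair removes it, so there is a genuine gap.

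Enlarging the domain cannot work, because your own cap comparison limits the radius. If $u\geqslant s$ on $B_\rho(x_0)$, comparing with the hyperboloid of mean curvature $-s^p$ (that is, $r=ns^{-p}$) gives $s\leqslant\min_{\partial B_\rho(x_0)}u\leqslant M-\rho+ns^{-p}$, hence $\rho\leqslant M-s+ns^{-p}$. The same bound applies to the distance from $x_0$ to $\partial\{u>M/2\}$, so $\rho\gtrsim M^{p+2}$ is never available.

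Chaining fails as well. A step at height $s$ with radius $\sim s$ gains a Bernstein deficit $\sim s^{-2p-1}$ against a cap error $\sim s^{-p}$, so steps at large height never win. Once the chain reaches bounded heights, both sides are $O(1)$, with the uncontrolled constant of Theorem~\ref{thm:bernstein} on one side.

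Your fallback, run as a ``lightlike segment'' argument, has the same defect. Integrating $1-|\nabla u|\geqslant c(1+u)^{-2p-2}$ along a nearly null segment descending from height $M$ to height $m$ gives a total deficit of only $O\bigl((1+m)^{-2p-1}\bigr)$. This must beat the cap's $O(m^{-p})$, and nothing forces it to. Any comparison of mean slopes, i.e.\ of integrated deficits, pits two $O(1)$ quantities against each other and cannot close without explicit control of the Bernstein constant.

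The missing idea is to use the \emph{pointwise} slope that the comparison forces at the boundary touching point, and to apply Theorem~\ref{thm:bernstein} there, at a point of \emph{bounded} height. The paper proceeds as follows.
\begin{itemize}
\item Take $y$ with $u(y)>1$, the maximal radius $R=u(y)-1$ (so $u\geqslant1$ on $\overline{B_R(y)}$), and $m=\min_{\partial B_R(y)}u\geqslant1$.
\item An interior minimum is excluded by the equation, so $u\geqslant m$ on the ball. Hence the cap $b$ with mean curvature $-m^p$ and $b=m$ on $\partial B_R(y)$ satisfies $u\geqslant b$.
\item Evaluating at the center gives
\[
R+1=u(y)\geqslant b(0)\geqslant m+R-\frac{n}{m^p},
\]
so $1\leqslant m\leqslant n+1$.
\item At $\hat x\in\partial B_R(y)$ with $u(\hat x)=m$, the function $u-b$ is nonnegative inside and vanishes at $\hat x$. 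With $\nu$ the outward normal, this gives $\partial_\nu u(\hat x)\leqslant b'(R)$, and therefore $W(\hat x)\geqslant\sqrt{1+(m^pR/n)^2}\geqslant\sqrt{1+(R/n)^2}$, which grows linearly in $R$.
\item On the other hand, Theorem~\ref{thm:bernstein} at $\hat x$ gives $W(\hat x)\leqslant C(1+m)^{p+1}\leqslant C(n+2)^{p+1}$.
\end{itemize}
Hence $R$, and therefore $u(y)$, is bounded.

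In your notation, the fix is to take $\rho=M-1$ rather than $M/2$, and to let the cap's curvature be $m^p$ with $m$ the boundary minimum. The center evaluation then pins $m\leqslant n+1$, and Step~3 is replaced by the boundary-derivative argument at $\hat x$.
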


\begin{proof}
    If $u\leqslant1$ in $\mathbb R^n$, the conclusion is immediate. Fix a point $y\in\mathbb R^n$ with $u(y)>1$, and set $R=u(y)-1>0$. The strict spacelike condition implies $u(x)\geqslant u(y)-|x-y|$, and hence
    \begin{equation}\label{eq:height-unit-ball}
        u\geqslant1\quad\text{in}\quad\overline{B_R(y)}.
    \end{equation}

    Set $m:=\min_{\partial B_R(y)}u\geqslant1$. The minimum of $u$ on $\overline{B_R(y)}$ cannot be attained in the interior. Indeed, at an interior minimum we would have $\nabla u=0$ and $\nabla^2 u\geqslant0$, and hence $\operatorname{div}\big(\frac{\nabla u}{\sqrt{1-|\nabla u|^2}}\big)=\Delta u\geqslant0$. On the other hand, the equation would reduce at that point to $\Delta u=-u^p<0$, because $u\geqslant1$ in $\overline{B_R(y)}$ by \eqref{eq:height-unit-ball}. This is a contradiction.
    It follows that
    \begin{equation}\label{eq:boundary-minimum}
        u\geqslant m\quad\text{in }\overline{B_R(y)},\quad u>m\quad\text{in }B_R(y).
    \end{equation}

    We compare $u$ with a radial hyperbolic cap. For $r=|x-y|$, define
    \begin{equation}\label{eq:hyperbolic-cap}
        b(r)=m+\int_r^R\frac{m^p s/n}{\sqrt{1+(m^p s/n)^2}}\,\mathrm ds.
    \end{equation}
    Then $b'(r)=-\frac{m^p r/n}{\sqrt{1+(m^p r/n)^2}}$ and $|b'(r)|<1$. Thus, $b$ is a smooth spacelike radial function on $\overline{B_R(y)}$ satisfying $b=m$ on $\partial B_R(y)$. Moreover, $\frac{b'(r)}{\sqrt{1-b'(r)^2}}=-\frac{m^p}{n}r$, and hence
    \begin{equation}\label{eq:cap-equation}
        \operatorname{div}\big(\frac{\nabla b}{\sqrt{1-|\nabla b|^2}}\big)=\operatorname{div}\big(-\frac{m^p}{n}(x-y)\big)=-m^p.
    \end{equation}

    For $\xi\in B_1(0)$, define $A(\xi)=\frac{\xi}{\sqrt{1-|\xi|^2}}$ and $\mathcal M(v)=\operatorname{div}(A(\nabla v))$. Set $w=u-b$. Since $p\geqslant1$ and $u\geqslant m$ by \eqref{eq:boundary-minimum}, equations \eqref{eq:cap-equation} and \eqref{eq:main-equation} give
    \begin{equation}\label{eq:operator-comparison}
        \mathcal M(u)-\mathcal M(b)=-u^p+m^p\leqslant0.
    \end{equation}
    The mean value formula gives
    \begin{equation}\label{eq:linearization-matrix}
        A(\nabla u)-A(\nabla b)=B(x)\nabla w,\quad\text{where}\quad B(x)=\int_0^1\mathrm DA\big(\nabla b+t(\nabla u-\nabla b)\big)\,\mathrm dt.
    \end{equation}
    For $|\xi|<1$, $\mathrm DA(\xi)=\frac1{\sqrt{1-|\xi|^2}}I+\frac{\xi\otimes\xi}{(1-|\xi|^2)^{3/2}}$. In particular, $\eta^T\mathrm DA(\xi)\eta\geqslant|\eta|^2$ for every $\eta\in\mathbb R^n$. The unit ball is convex, so every argument of $\mathrm DA$ in \eqref{eq:linearization-matrix} lies in $B_1(0)$. Moreover, compactness of $\overline{B_R(y)}$ and strict spacelikeness imply that these arguments remain in a compact subset of $B_1(0)$. Consequently, $B(x)$ is bounded and uniformly positive definite.
    
    Taking the divergence of \eqref{eq:linearization-matrix} and using \eqref{eq:operator-comparison}, we obtain $\operatorname{div}(B(x)\nabla w)\leqslant0$ in $B_R(y)$. On the boundary, $w=u-m\geqslant0$. The weak minimum principle therefore yields
    \begin{equation}\label{eq:cap-comparison}
        u\geqslant b\quad\text{in}\quad B_R(y).
    \end{equation}
    Evaluating \eqref{eq:cap-comparison} at the center and using \eqref{eq:hyperbolic-cap}, we obtain
    \[
u(y)\geqslant b(0)=m+\frac{n}{m^p}\big(\sqrt{1+(\frac{m^pR}{n})^2}-1\big)\geqslant m+R-\frac{n}{m^p}.
\]
    Since $u(y)=R+1$, it follows that $m\leqslant1+\frac{n}{m^p}$. Together with $m\geqslant1$, this gives
    \begin{equation}\label{eq:boundary-height-bound}
        1\leqslant m\leqslant n+1.
    \end{equation}

    Choose $\hat x\in\partial B_R(y)$ such that $u(\hat x)=m$, and let $\nu$ denote the outward Euclidean unit normal to $\partial B_R(y)$ at $\hat x$. By \eqref{eq:cap-comparison}, $w\geqslant0$ in $B_R(y)$ and $w(\hat x)=0$. For all sufficiently small $s>0$, we have $w(\hat x-s\nu)\geqslant w(\hat x)=0$. Thus, the one-sided derivative at $s=0$ satisfies $-\partial_\nu w(\hat x)\geqslant0$, or equivalently $\partial_\nu w(\hat x)\leqslant0$. Hence $u_\nu(\hat x)\leqslant b'(R)=-\frac{m^pR/n}{\sqrt{1+(m^pR/n)^2}}<0$. Since $|\nabla u(\hat x)|\geqslant|u_\nu(\hat x)|$, it follows that
    \begin{equation}\label{eq:W-lower-bound}
        W(\hat x)=\frac1{\sqrt{1-|\nabla u(\hat x)|^2}}\geqslant\frac1{\sqrt{1-u_\nu(\hat x)^2}}\geqslant\sqrt{1+(\frac{m^pR}{n})^2}.
    \end{equation}
    On the other hand, the Bernstein estimate of Section~\ref{sec:Bernstein} and \eqref{eq:boundary-height-bound} give
    \begin{equation}\label{eq:W-upper-bound}
        W(\hat x)\leqslant C(n,p)(1+m)^{p+1}\leqslant C(n,p)(n+2)^{p+1}=:C_*.
    \end{equation}
    Since $m\geqslant1$, inequalities \eqref{eq:W-lower-bound} and \eqref{eq:W-upper-bound} imply $\sqrt{1+(R/n)^2}\leqslant C_*$, and hence $R\leqslant n\sqrt{C_*^2-1}$. Therefore, $u(y)=R+1\leqslant 1+n\sqrt{C_*^2-1}$, which bounds $u(y)$ in terms of $n$ and $p$ whenever $u(y)>1$. The remaining points already satisfy $u(y)\leqslant1$, and the lemma follows.
\end{proof}

\begin{proof}[Proof of Theorem~\ref{thm:universal-bounds}]
Lemma~\ref{lem:height-bound} gives $0\leqslant u\leqslant C_1(n,p)$.
Theorem~\ref{thm:bernstein} then yields
\[
1\leqslant W\leqslant C_2(n,p)(1+C_1(n,p))^{p+1}=:C.
\]
Since $W=(1-|\nabla u|^2)^{-1/2}$, it follows that
\[
|\nabla u|\leqslant\sqrt{1-C^{-2}}<1
\qquad\text{in }\mathbb R^n.
\]
This proves all the assertions of Theorem~\ref{thm:universal-bounds}.
\end{proof}

\section{Integral estimates and proof of the Liouville theorem}
\label{sec:subcritical}

Assume, toward a contradiction, that the solution in Theorem~\ref{thm:main}
is nontrivial. Lemma~\ref{lem:regularity-reduction} gives $u>0$ and
$u\in C^\infty(\mathbb R^n)$, while Theorem~\ref{thm:universal-bounds} gives
\begin{equation}\label{eq:uniform-W-bound}
1\leqslant W\leqslant C(n,p)
\qquad\text{in }\mathbb R^n.
\end{equation}

\subsection{The linear-reaction and two-dimensional cases}

We first settle the case $p=1$ in every dimension $n\geqslant2$.

\begin{proof}[Proof of Theorem~\ref{thm:main} when $p=1$]
Let $R>1$, and choose $\eta\in C_c^\infty(B_{2R})$ such that
$\eta\equiv1$ in $B_R$, $0\leqslant\eta\leqslant1$, and
$|\nabla\eta|\leqslant CR^{-1}$. Multiplying
\eqref{eq:main-equation} by $u^{-1}\eta^2$ and integrating by parts gives
\begin{align*}
\int_{B_{2R}}\eta^2
+\int_{B_{2R}}W\eta^2|\nabla\log u|^2
&=2\int_{B_{2R}\setminus B_R}
W\eta\nabla\log u\cdot\nabla\eta\\
&\leqslant\frac12\int_{B_{2R}}W\eta^2|\nabla\log u|^2
+2\int_{B_{2R}\setminus B_R}W|\nabla\eta|^2.
\end{align*}
Absorbing the energy term and using \eqref{eq:uniform-W-bound}, we obtain
\[
\int_{B_{2R}}\eta^2
+\frac12\int_{B_{2R}}W\eta^2|\nabla\log u|^2
\leqslant
2\int_{B_{2R}\setminus B_R}W|\nabla\eta|^2
\leqslant C R^{n-2},
\]
where $C$ is independent of $R$. Since $\eta\equiv1$ in $B_R$, it follows
that
\[
|B_R|\leqslant C R^{n-2}.
\]
This contradicts $|B_R|=c_nR^n$ as $R\to\infty$.
\end{proof}

It remains to treat $p>1$. In dimension two, a logarithmic test and the
vanishing of an annular energy tail suffice.

\begin{proof}[Proof of Theorem~\ref{thm:main} when $n=2$ and $p>1$]
Let $R>1$, and choose $\eta\in C_c^\infty(B_{2R})$ such that
$\eta\equiv1$ in $B_R$, $0\leqslant\eta\leqslant1$, and
$|\nabla\eta|\leqslant CR^{-1}$. Multiplying
\eqref{eq:main-equation} by $u^{-1}\eta^2$ and integrating gives, for every
$A>0$,
\begin{align}
\begin{split}\label{eq:two-dimensional-test}
&\int_{B_{2R}}u^{p-1}\eta^2
+\int_{B_{2R}}W\eta^2|\nabla\log u|^2\\
&\quad=2\int_{B_{2R}\setminus B_R}
W\eta\nabla\log u\cdot\nabla\eta\\
&\quad\leqslant
A\int_{B_{2R}\setminus B_R}W\eta^2|\nabla\log u|^2
+A^{-1}\int_{B_{2R}\setminus B_R}W|\nabla\eta|^2.
\end{split}
\end{align}
Taking $A=\frac12$, absorbing the annular energy, and using
\eqref{eq:uniform-W-bound}, we find
\[
\int_{B_{2R}}u^{p-1}\eta^2
+\frac12\int_{B_{2R}}W\eta^2|\nabla\log u|^2
\leqslant C.
\]
Since $\eta\equiv1$ in $B_R$, monotone convergence shows that
$W|\nabla\log u|^2\in L^1(\mathbb R^2)$. Consequently,
\begin{equation}\label{eq:two-dimensional-tail}
\lim_{R\to\infty}
\int_{B_{2R}\setminus B_R}W\eta^2|\nabla\log u|^2=0.
\end{equation}
For arbitrary $A>0$, let $R\to\infty$ in
\eqref{eq:two-dimensional-test}. Using
\eqref{eq:two-dimensional-tail}, Fatou's lemma, and
\eqref{eq:uniform-W-bound}, we obtain
\[
\int_{\mathbb R^2}u^{p-1}
\leqslant
A^{-1}\limsup_{R\to\infty}
\int_{B_{2R}\setminus B_R}W|\nabla\eta|^2
\leqslant CA^{-1}.
\]
Letting $A\to\infty$ yields $u\equiv0$, a contradiction.
\end{proof}

We henceforth assume
\begin{equation}\label{eq:remaining-range}
n\geqslant3,
\qquad
1<p<\frac{n+2}{n-2}.
\end{equation}

\subsection{The weighted trace-free tensor identity}

All contractions in the remainder of this section are taken with respect to
the Euclidean metric $\delta_{ij}$. Set
\[
H:=\operatorname{div}(W\nabla u)=-u^p,
\qquad
X_i:=Wu_i,
\]
so that $H=\partial_iX_i$. Repeated indices are summed from $1$ to $n$.
Differentiating the equation yields
$\partial_iH=p\frac{H}{u}u_i$. Define two trace-free tensors by
\begin{equation}\label{eq:tensor-definitions}
K_{ij}:=X_{i,j}-\frac{H}{n}\delta_{ij},
\qquad
L_{ij}:=\frac{X_iu_j}{u}
-\frac{|\nabla u|^2}{nu}W\delta_{ij}.
\end{equation}
Then
$\partial_iK_{ij}=\frac{n-1}{n}\partial_jH
=\frac{n-1}{n}p\frac{H}{u}u_j$. Using
$X_{i,j}u_i=W^3u_{ij}u_i$, we obtain
\[
\partial_iL_{ij}
=\frac{n-1}{n^2}(n+1+W^{-2})\frac{H}{u}u_j
+\frac{(n-1)W^{-2}-1}{nu}K_{ij}u_i
-\frac{n-1}{n}\frac{|\nabla u|^2}{u^2}Wu_j.
\]
Note that $L$ is symmetric, whereas $K$ is not.

A direct computation gives
\[
u^{-\beta}\partial_i(u^\beta K_{ij}X_j)
=K_{ij}K_{ji}+\beta K_{ij}L_{ij}
+\frac{n-1}{n}p\frac{H|\nabla u|^2}{u}W,
\]
\begin{align*}
u^{-\beta}\partial_i(u^\beta L_{ij}X_j)
={}&\frac{n-1}{n^2}(n+1+W^{-2})\frac{H|\nabla u|^2}{u}W\\
&+\frac{n-1}{n}(1+W^{-2})K_{ij}L_{ij}
+(\beta-1)L_{ij}L_{ij},
\end{align*}
where the second identity uses
$L_{ij}L_{ij}=\frac{n-1}{n}\frac{|\nabla u|^4}{u^2}W^2$.
Combining these identities yields the following formula.

\begin{lemma}\label{lem:general-tensor-identity}
Let $f\in C^1([0,1])$ and $\beta\in\mathbb R$, and let the tensors be
defined by \eqref{eq:tensor-definitions}. Then
\begin{align}
\begin{split}\label{eq:general-tensor-identity}
&u^{-\beta}\partial_i
\left\{u^\beta[K_{ij}+f(|\nabla u|^2)L_{ij}]X_j\right\}\\
&\quad=E_{ij}E_{ji}
+[(\beta-1)f(|\nabla u|^2)-h_1^2(|\nabla u|^2)]L_{ij}L_{ij}
+h_2(|\nabla u|^2)\frac{H|\nabla u|^2}{u}W,
\end{split}
\end{align}
where $E_{ij}=K_{ij}+h_1(|\nabla u|^2)L_{ij}$ and
\[
h_1(x)=\frac{\beta}{2}
+\frac{n-1}{n}\left[\left(1-\frac{x}{2}\right)f(x)
+x(1-x)f'(x)\right],
\]
\[
h_2(x)=\frac{n-1}{n^2}
\left[np+(n+2-x)f(x)+2x(1-x)f'(x)\right].
\]
\end{lemma}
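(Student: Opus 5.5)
The identity is obtained by expanding the left side with the product rule and reusing the two divergence identities displayed just before the lemma. Write $x=|\nabla u|^2$ and $f=f(x)$. Then
\[
u^{-\beta}\partial_i\{u^\beta(K_{ij}+fL_{ij})X_j\}
=u^{-\beta}\partial_i(u^\beta K_{ij}X_j)
+f\,u^{-\beta}\partial_i(u^\beta L_{ij}X_j)
+(\partial_i f)L_{ij}X_j .
\]
The first two terms are given by the displayed identities. The new ingredient is the derivative term. We have $\partial_i f=f'(x)\,2u_ku_{ki}$, and since $L_{ij}X_j=\frac{X_iu_jX_j}{u}-\frac{xW}{nu}X_i=\frac{W^2x}{u}u_i\bigl(1-\frac1n\bigr)$, we get
\[
(\partial_i f)L_{ij}X_j=2f'\,\frac{n-1}{n}\frac{W^2x}{u}\,u_ku_{ki}u_i .
\]

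The next step rewrites $u_ku_{ki}u_i$ in terms of $K$ and $L$. Using $X_{i,j}u_i=W^3u_{ij}u_i$, we have $K_{ij}u_iu_j=W^3u_{ij}u_iu_j-\frac{H}{n}x$. Also $K_{ij}L_{ij}=\frac{W}{u}X_{i,j}u_iu_j-\frac{xW}{nu}H$ (using $K$ trace-free), which equals $\frac{W}{u}K_{ij}u_iu_j$. Hence
\[
\frac{W^2}{u}u_ku_{ki}u_i=\frac{1}{W^2}\Bigl(K_{ij}L_{ij}+\frac{xWH}{nu}\Bigr).
\]
With $W^{-2}=1-x$, the derivative term becomes
\[
\frac{n-1}{n}\,2x(1-x)f'\,K_{ij}L_{ij}+\frac{n-1}{n^2}\,2x(1-x)f'\,\frac{H x W}{u}.
\]

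Collecting everything gives:
\begin{itemize}
\item $K_{ij}K_{ji}$, with coefficient $1$;
\item $K_{ij}L_{ij}$, with coefficient $\beta+\frac{n-1}{n}\bigl[(2-x)f+2x(1-x)f'\bigr]$, using $1+W^{-2}=2-x$;
\item $L_{ij}L_{ij}$, with coefficient $(\beta-1)f$;
\item $\frac{HxW}{u}$, with coefficient $\frac{n-1}{n^2}\bigl[np+(n+2-x)f+2x(1-x)f'\bigr]$, using $n+1+W^{-2}=n+2-x$.
\end{itemize}
The last coefficient is exactly $h_2$. The $K_{ij}L_{ij}$ coefficient is exactly $2h_1$.

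Finally, complete the square. Since $L$ is symmetric, $E_{ij}E_{ji}=K_{ij}K_{ji}+2h_1K_{ij}L_{ij}+h_1^2L_{ij}L_{ij}$, because $K_{ji}L_{ij}=K_{ij}L_{ij}$. Substituting $K_{ij}K_{ji}+2h_1K_{ij}L_{ij}=E_{ij}E_{ji}-h_1^2L_{ij}L_{ij}$ yields \eqref{eq:general-tensor-identity}.

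The main risk is bookkeeping. In particular, one must get the factor $2$ from differentiating $|\nabla u|^2$ right and correctly convert $u_ku_{ki}u_i$ via $K_{ij}L_{ij}$. I would double-check this conversion by verifying the $f'$ contributions to both $h_1$ and $h_2$ against the stated formulas.
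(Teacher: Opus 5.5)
Your proposal is correct and follows the paper's proof. Like the paper, you expand by the product rule using the two displayed divergence identities, handle the extra term $(\partial_i f)L_{ij}X_j$ (the paper writes $\partial_i f=2f'W^{-3}X_{k,i}u_k$, which equals your $2f'u_ku_{ki}$), read off the coefficients $2h_1$ and $h_2$, and complete the square using the symmetry of $L$; your conversion of the $f'$ contributions via $K_{ij}L_{ij}=\frac{W}{u}K_{ij}u_iu_j$ and $W^{-2}=1-x$ checks out.
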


\begin{proof}
Using
$\partial_i[f(|\nabla u|^2)]
=2f'(|\nabla u|^2)W^{-3}X_{k,i}u_k$, we obtain
\begin{align*}
&u^{-\beta}\partial_i
\left\{u^\beta[K_{ij}+f(|\nabla u|^2)L_{ij}]X_j\right\}\\
&\quad=K_{ij}K_{ji}
+2h_1(|\nabla u|^2)K_{ij}L_{ij}
+(\beta-1)f(|\nabla u|^2)L_{ij}L_{ij}\\
&\qquad
+h_2(|\nabla u|^2)\frac{H|\nabla u|^2}{u}W.
\end{align*}
Writing $K_{ij}=E_{ij}-h_1(|\nabla u|^2)L_{ij}$ completes the square.
\end{proof}

We choose $f$ to eliminate the last term in
\eqref{eq:general-tensor-identity}.

\begin{lemma}\label{lem:f-properties}
Let $n\geqslant2$ and $p>0$, and define
\begin{equation}\label{eq:f-definition}
f(x)=-\frac{np}{n+2}
\frac{\Gamma(\frac n2+2)}
{\Gamma(\frac{n+3}{2})\sqrt{\pi}}
\int_0^1
\frac{t^{\frac{n+1}{2}}(1-t)^{-\frac12}}
{1-x+xt}\,\mathrm dt,
\qquad x\in[0,1].
\end{equation}
Then $f\in C^1([0,1])$ and $f'(x)<0$ for every $x\in[0,1]$. Moreover,
\[
f(0)=-\frac{np}{n+2},
\qquad
f(1)=-\frac{np}{n+1},
\]
and
\begin{equation}\label{eq:f-ode}
np+(n+2-x)f(x)+2x(1-x)f'(x)=0.
\end{equation}
\end{lemma}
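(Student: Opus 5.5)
The plan is to work directly with the integral representation. Write
\[
f(x)=-c\,I(x),\qquad I(x)=\int_0^1\frac{w(t)}{D(x,t)}\,\mathrm dt,\qquad w(t)=t^{\frac{n+1}{2}}(1-t)^{-\frac12},\qquad D(x,t)=1-x+xt,
\]
with $c=\frac{np}{n+2}\frac{\Gamma(\frac n2+2)}{\Gamma(\frac{n+3}{2})\sqrt\pi}>0$.

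\textbf{Regularity and monotonicity.} For $x\in[0,1]$ and $t\in(0,1]$ we have $D=1-x(1-t)\geqslant t>0$. Hence the integrand is bounded by $t^{\frac{n-1}{2}}(1-t)^{-\frac12}$, which is integrable. The $x$-derivative of the integrand is $w(t)(1-t)/D^2\geqslant0$. It is dominated by $t^{\frac{n-3}{2}}(1-t)^{\frac12}$, which is integrable because $\frac{n-3}{2}\geqslant-\frac12$ when $n\geqslant2$. Dominated convergence therefore gives $I\in C^1([0,1])$, with $I'(x)=\int_0^1 w(1-t)D^{-2}\,\mathrm dt>0$. Consequently $f'=-cI'<0$ on $[0,1]$, including at the endpoints.

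\textbf{Endpoint values.} At $x=0$ we have $D=1$, so $I(0)=B(\frac{n+3}{2},\frac12)=\Gamma(\frac{n+3}{2})\sqrt\pi/\Gamma(\frac n2+2)$. Hence $f(0)=-\frac{np}{n+2}$; indeed the normalization of $c$ was chosen exactly so that $cI(0)=\frac{np}{n+2}$. At $x=1$ we have $D=t$, so $I(1)=B(\frac{n+1}{2},\frac12)$. Using $\Gamma(\frac n2+2)=\frac{n+2}{2}\Gamma(\frac n2+1)$ and $\Gamma(\frac{n+3}{2})=\frac{n+1}{2}\Gamma(\frac{n+1}{2})$, we get $cI(1)=\frac{np}{n+2}\cdot\frac{n+2}{n+1}=\frac{np}{n+1}$.

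\textbf{The ODE.} Since $np=(n+2)cI(0)$, identity \eqref{eq:f-ode} is equivalent to
\[
(n+2-x)I(x)+2x(1-x)I'(x)=(n+2)I(0)\qquad\text{for all }x\in[0,1].
\]
This holds trivially at $x=0$. For general $x$, I would write the left-hand side as $\int_0^1 w\bigl[\frac{n+2-x}{D}+\frac{2x(1-x)(1-t)}{D^2}\bigr]\mathrm dt$. I would then show that its difference from $(n+2)\int_0^1 w\,\mathrm dt$ is the integral of an exact $t$-derivative. The natural ansatz is
\[
\Phi(t)=\kappa\,x\,\frac{t^{\frac{n+3}{2}}(1-t)^{\frac12}}{D},
\]
which vanishes at $t=0$ and $t=1$. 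Differentiating gives
\[
\Phi'=\kappa x\,t^{\frac{n+1}{2}}(1-t)^{-\frac12}\Bigl[\frac{\frac{n+3}{2}(1-t)-\frac t2}{D}-\frac{xt(1-t)}{D^2}\Bigr].
\]
Using $t=(D-1+x)/x$ to rewrite everything as a polynomial in $1/D$ with coefficients in $x$, one matches the $1/D^2$, $1/D$, and constant terms and solves for $\kappa$; I expect $\kappa=-2$. If the bookkeeping resists, the fallback is to recognize $I$ as a Gauss hypergeometric function, $I(x)=B\cdot{}_2F_1(1,\frac12;\frac n2+2;x)$ after the substitution $s=1-t$ and Euler's integral. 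One then checks that the hypergeometric equation for $(a,b,c)=(1,\frac12,\frac n2+2)$, together with the relation between $F$ and $F'$, reduces to the stated first-order inhomogeneous ODE. Equivalently, one verifies the ODE coefficientwise on the power series $\sum_k\frac{(1)_k(\frac12)_k}{(\frac n2+2)_k k!}x^k$, whose coefficients satisfy $(k+1)(\frac n2+2+k)a_{k+1}=(k+\frac12)(k+1)a_k$. Either way, the main obstacle is only this algebraic verification of the exact-derivative identity; the analytic parts are routine.
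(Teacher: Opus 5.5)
Your proposal is correct and follows the paper's proof essentially step for step. The same dominating functions give $C^1$ regularity and $f'<0$, and the endpoint values come from the same Beta integrals. For the ODE, the paper integrates the $s$-derivative of $2s^{1/2}(1-s)^{(n+3)/2}/(1-xs)$, which after $s=1-t$ and multiplication by $x$ is precisely your $\Phi$; the paper passes through the auxiliary integral $J(x)$ with $I(x)-I(0)=xJ(x)$ rather than matching powers of $1/D$ directly.

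The only slip is the predicted constant. If you match $\Phi'$ against the integrand minus $(n+2)w$, the coefficients of $D^{-2}$, $D^{-1}$ and $D^{0}$ all agree for $\kappa=2$, not $-2$. This does not affect the argument, since $\Phi(0)=\Phi(1)=0$.
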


\begin{proof}
Set
\[
A_{n,p}:=\frac{np}{n+2}
\frac{\Gamma(\frac n2+2)}
{\Gamma(\frac{n+3}{2})\sqrt{\pi}},
\qquad
D_x(t):=1-x+xt.
\]
Since $D_x(t)\geqslant t$ for $x,t\in[0,1]$, we have
\[
0\leqslant
\frac{t^{\frac{n+1}{2}}(1-t)^{-\frac12}}{D_x(t)}
\leqslant t^{\frac{n-1}{2}}(1-t)^{-\frac12},
\]
\[
0\leqslant
\frac{t^{\frac{n+1}{2}}(1-t)^{\frac12}}{D_x(t)^2}
\leqslant t^{\frac{n-3}{2}}(1-t)^{\frac12}.
\]
The functions on the right are integrable over $(0,1)$. Differentiation
under the integral sign, uniformly up to the endpoints, gives
\begin{equation}\label{eq:f-derivative-integral}
f'(x)=-A_{n,p}\int_0^1
\frac{t^{\frac{n+1}{2}}(1-t)^{\frac12}}
{D_x(t)^2}\,\mathrm dt<0.
\end{equation}
Thus $f\in C^1([0,1])$ and is strictly decreasing.

By the beta-function identity,
\[
f(0)=-A_{n,p}\mathrm B\left(\frac{n+3}{2},\frac12\right)
=-\frac{np}{n+2},
\]
\[
f(1)=-A_{n,p}\mathrm B\left(\frac{n+1}{2},\frac12\right)
=-\frac{np}{n+1}.
\]

After the change of variables $s=1-t$, define
\[
I(x):=\int_0^1
\frac{(1-s)^{\frac{n+1}{2}}s^{-\frac12}}{1-xs}\,\mathrm ds,
\qquad
J(x):=\int_0^1
\frac{(1-s)^{\frac{n+1}{2}}s^{\frac12}}{1-xs}\,\mathrm ds.
\]
Then $I(x)-I(0)=xJ(x)$ and
\[
I'(x)=\int_0^1
\frac{(1-s)^{\frac{n+1}{2}}s^{\frac12}}{(1-xs)^2}\,\mathrm ds.
\]
The function
$2s^{1/2}(1-s)^{(n+3)/2}/(1-xs)$ vanishes at both endpoints. Integrating its
derivative over $(0,1)$ gives
\[
I(0)-(n+2-x)J(x)-2(1-x)I'(x)=0.
\]
Multiplying by $x$ and using $I(0)=I(x)-xJ(x)$, we obtain
\begin{equation}\label{eq:I-ode}
2x(1-x)I'(x)+(n+2-x)I(x)=(n+2)I(0).
\end{equation}
Since $f(x)=-A_{n,p}I(x)$ and
$A_{n,p}I(0)=\frac{np}{n+2}$, multiplying
\eqref{eq:I-ode} by $-A_{n,p}$ proves \eqref{eq:f-ode}.
\end{proof}

Lemma~\ref{lem:f-properties} yields the crucial differential identity.

\begin{proposition}\label{prop:tensor-identity}
Let $\beta\in\mathbb R$, and let the tensors be defined by
\eqref{eq:tensor-definitions}. Then
\begin{equation}\label{eq:tensor-identity}
u^{-\beta}\partial_i
\left\{u^\beta[K_{ij}+f(|\nabla u|^2)L_{ij}]X_j\right\}
=E_{ij}E_{ji}+\frac14h(|\nabla u|^2)L_{ij}L_{ij},
\end{equation}
where $E_{ij}=K_{ij}+h_1(|\nabla u|^2)L_{ij}$, $f$ is defined by
\eqref{eq:f-definition}, and
\[
h(x)=-(n-1)^2f^2(x)
-2[p(n-1)^2-\beta(n+1)+2]f(x)
-(\beta-np+p)^2.
\]
\end{proposition}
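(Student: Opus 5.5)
The proof is a direct specialization of Lemma~\ref{lem:general-tensor-identity} to the function $f$ of Lemma~\ref{lem:f-properties}. Since $u$ is strictly spacelike, $x=|\nabla u|^2$ takes values in $[0,1)$. By Lemma~\ref{lem:f-properties}, $f\in C^1([0,1])$, so Lemma~\ref{lem:general-tensor-identity} applies. It remains to identify the last two terms in \eqref{eq:general-tensor-identity}.

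\emph{The $H$-term.} The bracket in $h_2(x)$ is exactly the left-hand side of the ODE \eqref{eq:f-ode}. Hence $h_2\equiv0$, and the term $h_2(|\nabla u|^2)\frac{H|\nabla u|^2}{u}W$ drops out.

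\emph{Simplifying $h_1$.} Next I use \eqref{eq:f-ode} to eliminate $f'$ from $h_1$. The ODE gives $x(1-x)f'(x)=-\tfrac12\bigl[np+(n+2-x)f(x)\bigr]$, so
\[
\Bigl(1-\frac x2\Bigr)f+x(1-x)f'
=f-\frac{x}{2}f-\frac{np}{2}-\frac{n+2-x}{2}f
=-\frac{n}{2}(p+f).
\]
The $x$-dependence cancels except through $f$, and therefore
\[
h_1=\frac12\bigl[\beta-(n-1)p-(n-1)f\bigr].
\]

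\emph{The coefficient of $L_{ij}L_{ij}$.} Write $h_1=\frac12\bigl[(\beta-np+p)-(n-1)f\bigr]$ and expand:
\begin{align*}
4\bigl[(\beta-1)f-h_1^2\bigr]
&=4(\beta-1)f-(\beta-np+p)^2+2(n-1)(\beta-np+p)f-(n-1)^2f^2\\
&=-(n-1)^2f^2+\bigl[2(n+1)\beta-4-2(n-1)^2p\bigr]f-(\beta-np+p)^2.
\end{align*}
In the middle coefficient I used $4\beta+2(n-1)\beta=2(n+1)\beta$ and $2(n-1)(-np+p)=-2(n-1)^2p$. That coefficient equals $-2\bigl[p(n-1)^2-\beta(n+1)+2\bigr]$, so the right-hand side is exactly $h(x)$ as defined in the statement. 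Thus $(\beta-1)f-h_1^2=\frac14 h$.

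Substituting these three facts into \eqref{eq:general-tensor-identity} yields \eqref{eq:tensor-identity}. The only real content is the simplification of $h_1$ via the ODE, which removes all explicit dependence on $x$ and $f'$. After that, the argument is elementary algebra, and the main risk is a sign or bookkeeping error in the final expansion.
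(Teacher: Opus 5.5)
Your proposal is correct and matches the paper's proof: you specialize Lemma~\ref{lem:general-tensor-identity}, use \eqref{eq:f-ode} to get $h_2\equiv0$ and $h_1=\frac12\bigl(\beta-(n-1)[p+f]\bigr)$, and then expand to check $4[(\beta-1)f-h_1^2]=h$. Your expansion of the $f$-coefficient, which the paper leaves as ``direct expansion,'' is accurate.
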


\begin{proof}
By Lemma~\ref{lem:f-properties}, $h_2\equiv0$. Moreover,
\[
\left(1-\frac{x}{2}\right)f(x)+x(1-x)f'(x)
=-\frac n2[p+f(x)].
\]
Consequently,
$h_1(x)=\frac12\bigl(\beta-(n-1)[p+f(x)]\bigr)$. Lemma
\ref{lem:general-tensor-identity} therefore gives
\[
u^{-\beta}\partial_i
\left\{u^\beta[K_{ij}+f(|\nabla u|^2)L_{ij}]X_j\right\}
=E_{ij}E_{ji}+[(\beta-1)f-h_1^2](|\nabla u|^2)L_{ij}L_{ij}.
\]
Finally, direct expansion gives
$4[(\beta-1)f(x)-h_1^2(x)]=h(x)$.
\end{proof}

Although $E$ need not be symmetric, the special off-diagonal relation
inherited from the Lorentzian mean-curvature operator makes the contraction
$E_{ij}E_{ji}$ nonnegative.

\begin{lemma}\label{lem:tensor-positivity}
Let $\varepsilon>0$ and $\mu=(\mu_1,\ldots,\mu_n)\in\mathbb R^n$. Then
\[
E_{ij}E_{ji}\geqslant0,
\qquad
|E_{ij}X_j\mu_i|
\leqslant\varepsilon E_{ij}E_{ji}
+\frac{1}{4\varepsilon}W^2|\nabla u|^2|\mu|^2.
\]
\end{lemma}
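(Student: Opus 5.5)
The plan is to factor $E$ as the product of the positive definite matrix $\bar g^{-1}=(\bar g^{ij})$ and a symmetric matrix. Once that is done, both assertions reduce to linear algebra for a similarity-conjugated symmetric matrix.

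\emph{Step 1: factorization.} Write $A:=(\bar g^{ij})=I+W^2\nabla u\otimes\nabla u$, which is symmetric and positive definite. Differentiating $X_i=Wu_i$ and using $W_j=W^3u_ku_{kj}$ gives
\[
X_{i,j}=Wu_{ij}+W^3u_iu_ku_{kj}=W A_{ik}u_{kj}.
\]
So the matrix $(X_{i,j})$ equals $A\,(W\nabla^2u)$. Next, $A\nabla u=(1+W^2|\nabla u|^2)\nabla u=W^2\nabla u$, hence
\[
\nabla u\otimes\nabla u=A\,(W^{-2}\nabla u\otimes\nabla u).
\]
Consequently $E=K+h_1L$ has the form
\[
E=A S+\gamma I,\qquad S:=W\nabla^2u+\frac{h_1}{Wu}\nabla u\otimes\nabla u,
\]
where $\gamma$ is a scalar function; one has $\gamma=-\frac Hn-h_1\frac{|\nabla u|^2W}{nu}$, though its value is irrelevant. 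Setting $T:=S+\gamma A^{-1}$, which is symmetric because $A^{-1}$ is, we get $E=AT$.

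\emph{Step 2: nonnegativity.} Let $B:=A^{1/2}TA^{1/2}$, which is symmetric. Then
\[
E_{ij}E_{ji}=\operatorname{tr}(E^2)=\operatorname{tr}(ATAT)=\operatorname{tr}(B^2)=|B|^2\geqslant0.
\]
This is the only place where the non-symmetry of $E$ matters, and the factorization through $A$ handles it.

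\emph{Step 3: the Cauchy bound.} Write
\[
E_{ij}X_j\mu_i=\mu^TATX=(A^{1/2}\mu)^TB(A^{-1/2}X).
\]
By Cauchy--Schwarz and Young's inequality,
\[
|E_{ij}X_j\mu_i|\leqslant|B|\,|A^{1/2}\mu|\,|A^{-1/2}X|\leqslant\varepsilon|B|^2+\frac1{4\varepsilon}|A^{1/2}\mu|^2|A^{-1/2}X|^2.
\]
We have $|B|^2=E_{ij}E_{ji}$ by Step 2. For the remaining factors, $A^{-1}=I-\nabla u\otimes\nabla u=(\bar g_{ij})$, so
\[
|A^{-1/2}X|^2=X^TA^{-1}X=W^2|\nabla u|^2(1-|\nabla u|^2)=|\nabla u|^2.
\]
Also $\mu^TA\mu\leqslant(1+W^2|\nabla u|^2)|\mu|^2=W^2|\mu|^2$. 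The product is therefore at most $W^2|\nabla u|^2|\mu|^2$, which gives the stated inequality.

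The main obstacle is Step 1: recognizing that $K$, $L$, and the trace corrections all factor through the same matrix $A=\bar g^{-1}$. The key checks are the identities $X_{i,j}=WA_{ik}u_{kj}$ and $A\nabla u=W^2\nabla u$, together with the observation that the multiples of $\delta_{ij}$ can be absorbed as $\gamma A^{-1}$ without breaking symmetry. After that, the rest is routine.
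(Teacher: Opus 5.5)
Your proof is correct, and it reaches the lemma by a different route from the paper.

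\textbf{The paper's route.} It works pointwise in adapted coordinates. It rotates so that $u_1=|\nabla u|$ and diagonalizes the Hessian on $(\nabla u)^\perp$. It then checks that $E_{\iota\xi}=0$ for $\iota\neq\xi$ and that $E_{\iota1}=(1-u_1^2)E_{1\iota}$. From these relations it deduces the inequality $E_{ij}E_{ji}\geqslant\sum_i|E_{i1}|^2$. Since $X$ points along $e_1$, the contraction $E_{ij}X_j\mu_i=\sum_iE_{i1}X_1\mu_i$ is then disposed of by an elementary Cauchy--Schwarz and Young step.

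\textbf{Relation to your route.} Your factorization $E=\bar g^{-1}T$ with $T$ symmetric is the coordinate-free form of the same phenomenon. In the paper's frame $\bar g^{-1}=\operatorname{diag}(W^2,1,\dots,1)$, and this is exactly where the factor $1-u_1^2=W^{-2}$ comes from. Your factorization also makes explicit why $E_{ij}E_{ji}\geqslant 0$ despite $E$ being non-symmetric: $E$ is self-adjoint with respect to the induced metric. Indeed $X_{i,j}=\bar g^{ik}h_{kj}$ is the shape operator, and $E_{ij}E_{ji}$ is a squared $\bar g$-norm.

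\textbf{What each approach buys.} Your version needs no case distinction at critical points of $u$ and no diagonalization. It also gives an exact identity $E_{ij}E_{ji}=|A^{1/2}TA^{1/2}|^2$ rather than an inequality. The cost is working with $A^{\pm1/2}$ and the two auxiliary estimates $X^TA^{-1}X=|\nabla u|^2$ and $\mu^TA\mu\leqslant W^2|\mu|^2$; together these reproduce the paper's constant $W^2|\nabla u|^2$. The paper's version is more hands-on. It produces the more specific lower bound by the first column of $E$, which is exactly what the Cauchy step requires.
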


\begin{proof}
Greek indices $\iota$ and $\xi$ range from $2$ to $n$. Fix
$x_0\in\mathbb R^n$. If $\nabla u(x_0)\ne0$, choose an orthonormal coordinate
system such that $u_1=|\nabla u|$; if $\nabla u(x_0)=0$, choose the first
coordinate arbitrarily. Diagonalize the restriction of $(u_{ij})$ to the
orthogonal complement of the first coordinate. Then
$u_{\iota\xi}=0$ and $E_{\iota\xi}=0$ whenever $\iota\ne\xi$. Moreover,
$X_1=Wu_1$ and $X_\iota=0$. Since
$X_{i,j}=Wu_{ij}+W^3u_{kj}u_ku_i$, we have
\[
K_{\iota1}=X_{\iota,1}=Wu_{1\iota}
=(1-u_1^2)X_{1,\iota}
=(1-u_1^2)K_{1\iota},
\qquad
L_{\iota1}=L_{1\iota}=0.
\]
Thus $E_{\iota1}=(1-u_1^2)E_{1\iota}$, which implies
$|E_{\iota1}|\leqslant|E_{1\iota}|$ and
$E_{1\iota}E_{\iota1}\geqslant|E_{\iota1}|^2$. Hence
\[
E_{ij}E_{ji}
=\sum_{i=1}^n|E_{ii}|^2
+2\sum_{\iota=2}^nE_{1\iota}E_{\iota1}
\geqslant\sum_{i=1}^n|E_{i1}|^2\geqslant0.
\]
Finally,
\[
|E_{ij}X_j\mu_i|
=\left|\sum_{i=1}^nE_{i1}X_1\mu_i\right|
\leqslant\varepsilon\sum_{i=1}^n|E_{i1}|^2
+\frac{1}{4\varepsilon}|X_1|^2|\mu|^2,
\]
and $|X_1|^2=W^2|\nabla u|^2$ completes the proof.
\end{proof}

We next derive a coercive differential inequality from
\eqref{eq:tensor-identity}.

\begin{lemma}\label{lem:differential-coercivity}
Assume \eqref{eq:remaining-range} and set
$\beta=-\frac{2p}{n+2}$. There exists $\delta=\delta(n,p)>0$ such that
\begin{equation}\label{eq:differential-coercivity}
u^{-\beta}\partial_i
\left\{u^\beta[K_{ij}+f(|\nabla u|^2)L_{ij}]X_j
+2\delta u^\beta HX_i\right\}
\geqslant
\delta(E_{ij}E_{ji}+L_{ij}L_{ij}+H^2).
\end{equation}
\end{lemma}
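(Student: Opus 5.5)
The plan is to compute the divergence of the extra vector field $u^\beta HX_i$ directly, add it to the identity of Proposition~\ref{prop:tensor-identity}, and then show that the coefficient $h$ is uniformly positive for $\beta=-\frac{2p}{n+2}$ in the range \eqref{eq:remaining-range}. Once that positivity is known, a small multiple of the new term only needs to be absorbed.

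\textbf{Step 1: the extra divergence.} Using $\partial_iX_i=H$ and $\partial_iH=p\frac{H}{u}u_i$, I get
\[
u^{-\beta}\partial_i\bigl(u^\beta HX_i\bigr)=H^2+(p+\beta)\frac{H|\nabla u|^2}{u}W .
\]
From $L_{ij}L_{ij}=\frac{n-1}{n}\frac{|\nabla u|^4}{u^2}W^2$ we have $\frac{W|\nabla u|^2}{u}=\sqrt{\frac{n}{n-1}}\,|L|$. Young's inequality then gives
\[
(p+\beta)\frac{H|\nabla u|^2}{u}W\geqslant-\frac12H^2-C_{n,p}L_{ij}L_{ij}.
\]
Hence
\[
u^{-\beta}\partial_i\bigl(u^\beta HX_i\bigr)\geqslant\tfrac12H^2-C_{n,p}L_{ij}L_{ij}.
\]

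\textbf{Step 2: positivity of $h$.} This is the main step. Since $f$ is strictly decreasing on $[0,1]$ by Lemma~\ref{lem:f-properties}, $f(x)$ ranges over the interval $[-\frac{np}{n+1},-\frac{np}{n+2}]$. As a function of $f$, $h$ is a concave quadratic, so it suffices to check $h>0$ at the two endpoints. Then continuity and compactness of $[0,1]$ give $\min_{[0,1]}h(|\nabla u|^2)\geqslant c_0(n,p)>0$.

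With $\beta=-\frac{2p}{n+2}$ we have $\beta-np+p=-\frac{pn(n+1)}{n+2}$. At $f=-\frac{np}{n+2}$, multiplying $h$ by $\frac{(n+2)^2}{np}$ gives
\[
p\bigl[-n(n-1)^2+2(n-1)^2(n+2)+4(n+1)-n(n+1)^2\bigr]+4(n+2)=4(n+2)-4p(n-2).
\]
This is positive exactly when $p<\frac{n+2}{n-2}$, which explains the choice of $\beta$ and where the subcritical restriction enters. At $f=-\frac{np}{n+1}$ I would do the analogous expansion. I expect it also to reduce to an expression that is positive in the range \eqref{eq:remaining-range}, but I have not carried it out. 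If it fails, I would perturb $\beta$ slightly, since the endpoint computation is continuous in $\beta$, or use the interval structure more carefully. This endpoint check is the place I expect the real work to be.

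\textbf{Step 3: assembling.} Adding $2\delta$ times Step 1 to Proposition~\ref{prop:tensor-identity} gives
\[
u^{-\beta}\partial_i\{\cdots\}\geqslant E_{ij}E_{ji}+\Bigl(\frac{c_0}{4}\cdot\frac{4}{4}-2\delta C_{n,p}\Bigr)L_{ij}L_{ij}+\delta H^2 ,
\]
where the coefficient of $L_{ij}L_{ij}$ is $\frac{c_0}{4}-2\delta C_{n,p}$. Lemma~\ref{lem:tensor-positivity} gives $E_{ij}E_{ji}\geqslant0$, so $E_{ij}E_{ji}\geqslant\delta E_{ij}E_{ji}$ for $\delta\leqslant1$. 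Choosing $\delta\leqslant\min\{1,\frac{c_0}{8(2C_{n,p}+1)}\}$ makes the coefficient of $L_{ij}L_{ij}$ at least $\delta$, which yields \eqref{eq:differential-coercivity}.
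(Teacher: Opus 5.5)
Your plan is the same as the paper's: compute $u^{-\beta}\partial_i(u^\beta HX_i)=H^2+(p+\beta)\frac{H|\nabla u|^2}{u}W$ and absorb the cross term into $L_{ij}L_{ij}$. Then use concavity of $h$ in $f$ over $[-\frac{np}{n+1},-\frac{np}{n+2}]$, check the two endpoints, and take $\delta$ small. Steps 1 and 3 are correct, and so is your endpoint $f=-\frac{np}{n+2}$: your $4(n+2)-4p(n-2)$ is $\frac{(n+2)^2}{np}$ times the paper's $\mathcal J_1$.

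\textbf{The gap.} You never show $h>0$ at the other endpoint $f=-\frac{np}{n+1}$, and the constant $c_0>0$ that Step 3 relies on depends on it. Your proposed fallback does not repair this. The lemma fixes $\beta=-\frac{2p}{n+2}$. Also, continuity in $\beta$ can only preserve a strict sign you already have; it cannot fix one that fails.

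\textbf{How to close it.} The missing check is short. At $f=-\frac{np}{n+1}$ one finds
\[
h=-\beta^2-2p\beta-\frac{p[(n-1)^2p-4n(n+1)]}{(n+1)^2}
=\frac{4np(n+1+p)}{(n+1)^2}-\frac{n^2p^2}{(n+2)^2}
\qquad\text{for }\beta=-\frac{2p}{n+2}.
\]
Positivity follows from the chain
\[
\frac{4(n+1+p)}{(n+1)^2}>\frac{4(n+2)}{(n+1)^2}>\frac{n}{(n+2)(n-2)}>\frac{np}{(n+2)^2},
\]
whose three inequalities use $p>1$, $n\geqslant3$, and $p<\frac{n+2}{n-2}$, respectively. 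Concavity then gives $h\geqslant\min\{\mathcal J_1,\mathcal J_2\}>0$ pointwise, so no compactness argument is needed. With this inserted, your proof is complete and coincides with the paper's.
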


We first record the required algebraic fact.

\begin{lemma}\label{lem:coercivity-algebra}
Under \eqref{eq:remaining-range}, for
$\beta=-\frac{2p}{n+2}$ one has
\[
\mathcal J_1
:=-\beta^2-\frac{4p}{n+2}\beta
-\frac{4p[(n-1)^2p-n(n+2)]}{(n+2)^2}>0,
\]
\[
\mathcal J_2
:=-\beta^2-2p\beta
-\frac{p[(n-1)^2p-4n(n+1)]}{(n+1)^2}>0.
\]
\end{lemma}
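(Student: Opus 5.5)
The plan is to substitute $\beta=-\frac{2p}{n+2}$ directly into both expressions and reduce each to an explicit sign condition in $p$. Throughout, write $N=n+2$.

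\emph{The quantity $\mathcal J_1$.} With this choice of $\beta$,
\[
\beta^2=\frac{4p^2}{N^2},
\qquad
-\frac{4p}{N}\beta=\frac{8p^2}{N^2}.
\]
Substituting these gives
\[
\mathcal J_1=\frac{4p\bigl[p-(n-1)^2p+n(n+2)\bigr]}{N^2}.
\]
Since $1-(n-1)^2=-n(n-2)$, this becomes
\[
\mathcal J_1=\frac{4pn\bigl[(n+2)-p(n-2)\bigr]}{(n+2)^2}.
\]
This is positive exactly when $p<\frac{n+2}{n-2}$. So the positivity of $\mathcal J_1$ is precisely the Sobolev subcriticality in \eqref{eq:remaining-range}.

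\emph{The quantity $\mathcal J_2$.} Here
\[
-\beta^2-2p\beta=-\frac{4p^2}{N^2}+\frac{4p^2}{N}=\frac{4(n+1)p^2}{(n+2)^2}.
\]
Hence $\mathcal J_2=p\,\ell(p)$, where $\ell$ is the affine function
\[
\ell(p)=p\left[\frac{4(n+1)}{(n+2)^2}-\frac{(n-1)^2}{(n+1)^2}\right]+\frac{4n}{n+1}.
\]
The coefficient of $p$ has no fixed sign: it is positive for $n=3$ and negative for large $n$. I therefore use only affineness. We have $\ell(0)=\frac{4n}{n+1}>0$, so it suffices to show $\ell\bigl(\frac{n+2}{n-2}\bigr)\geqslant0$. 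Then $\ell>0$ on $\bigl[0,\frac{n+2}{n-2}\bigr)$, and in particular on $\bigl(1,\frac{n+2}{n-2}\bigr)$.

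\emph{The endpoint check.} This is the only step with any real computation. Multiply $\ell\bigl(\frac{n+2}{n-2}\bigr)$ by the positive factor $(n+2)(n-2)(n+1)^2$. The result is
\[
4(n+1)^3-(n-1)^2(n+2)^2+4n(n+1)(n+2)(n-2).
\]
The three terms expand as follows:
\begin{align*}
4(n+1)^3&=4n^3+12n^2+12n+4,\\
(n-1)^2(n+2)^2=(n^2+n-2)^2&=n^4+2n^3-3n^2-4n+4,\\
4n(n+1)(n^2-4)&=4n^4+4n^3-16n^2-16n.
\end{align*}
Combining them, the sum is
\[
3n^4+6n^3-n^2=n^2(3n^2+6n-1)>0.
\]
Therefore $\mathcal J_2>0$ throughout the range \eqref{eq:remaining-range}, and both inequalities of the lemma hold.
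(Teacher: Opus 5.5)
Your proof is correct and follows essentially the same route as the paper: you substitute $\beta=-\frac{2p}{n+2}$ directly, obtain the identical factorization $\mathcal J_1=\frac{4np[(n+2)-(n-2)p]}{(n+2)^2}$, and reach the same simplified form of $\mathcal J_2$ (the paper writes it as $\frac{4np(n+1+p)}{(n+1)^2}-\frac{n^2p^2}{(n+2)^2}$, which equals your $p\,\ell(p)$). The only difference is the last step for $\mathcal J_2$: the paper uses a chain of three inequalities relying on $p>1$, $n\geqslant3$, and subcriticality, whereas your affine endpoint check (reducing to the polynomial $n^2(3n^2+6n-1)$) needs only $0<p<\frac{n+2}{n-2}$, and in fact gives $\mathcal J_2>0$ at the critical exponent as well.
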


\begin{proof}
Substituting $\beta=-\frac{2p}{n+2}$ into the first expression gives
\[
\mathcal J_1
=\frac{4np}{(n+2)^2}\bigl[(n+2)-(n-2)p\bigr]>0.
\]
For the second expression, substitution and simplification give
\[
\mathcal J_2
=\frac{4np(n+1+p)}{(n+1)^2}
-\frac{n^2p^2}{(n+2)^2}.
\]
Since $p>1$,
\[
\frac{4(n+1+p)}{(n+1)^2}
>\frac{4(n+2)}{(n+1)^2}.
\]
Moreover, $n\geqslant3$ implies
\[
\frac{4(n+2)}{(n+1)^2}
>\frac{n}{(n+2)(n-2)},
\]
whereas the subcritical assumption gives
\[
\frac{n}{(n+2)(n-2)}
>\frac{np}{(n+2)^2}.
\]
Thus $\mathcal J_2>0$.
\end{proof}

\begin{proof}[Proof of Lemma~\ref{lem:differential-coercivity}]
Since $f$ is decreasing on $[0,1]$,
\[
-\frac{np}{n+1}=f(1)
\leqslant f(x)
\leqslant f(0)=-\frac{np}{n+2}.
\]
As a function of $f$, the expression $h$ is a concave quadratic. Its minimum
on this interval is therefore attained at one of the endpoints. Hence
\begin{align*}
h(x)\geqslant\min\Biggl\{&
-\beta^2-\frac{4p}{n+2}\beta
-\frac{4p[(n-1)^2p-n(n+2)]}{(n+2)^2},\\
&-\beta^2-2p\beta
-\frac{p[(n-1)^2p-4n(n+1)]}{(n+1)^2}
\Biggr\}.
\end{align*}
Lemma~\ref{lem:coercivity-algebra} shows that the right-hand side is positive.

A direct computation followed by the Cauchy--Schwarz inequality gives
\[
u^{-\beta}\partial_i(u^\beta HX_i)
=(\beta+p)\frac{H|\nabla u|^2}{u}W+H^2
\geqslant\frac12H^2-C(n,\beta,p)L_{ij}L_{ij}.
\]
Choosing $\delta>0$ sufficiently small, we obtain
\eqref{eq:differential-coercivity} from \eqref{eq:tensor-identity}.
\end{proof}

\subsection{Completion of the high-dimensional proof}

\begin{proof}[Proof of Theorem~\ref{thm:main} under
\eqref{eq:remaining-range}]
Let $R>1$, and let $C$ denote a positive constant independent of $R$. Choose
$\eta\in C_c^\infty(B_{2R})$ such that $\eta\equiv1$ in $B_R$,
$0\leqslant\eta\leqslant1$, and $|\nabla\eta|\leqslant CR^{-1}$.
Set
\[
\beta=-\frac{2p}{n+2}
\]
and choose
\begin{equation}\label{eq:gamma-choice}
\gamma>\frac{2(\beta+2p)}{p-1}.
\end{equation}
The quantity on the right of \eqref{eq:gamma-choice} is greater than $4$:
indeed, this is equivalent to $p<n+2$, which follows from
\eqref{eq:remaining-range}.

\[
a(x):=f(|\nabla u(x)|^2)-h_1(|\nabla u(x)|^2).
\]
Multiplying \eqref{eq:differential-coercivity} by
$u^\beta\eta^\gamma$ and integrating gives
\begin{align}
\begin{split}\label{eq:tensor-cutoff-identity}
&\int_{B_{2R}}\delta u^\beta
(E_{ij}E_{ji}+L_{ij}L_{ij}+H^2)\eta^\gamma\\
&\quad\leqslant
-\gamma\int_{B_{2R}\setminus B_R}u^\beta
\Bigl\{E_{ij}X_j
+a(x)L_{ij}X_j
+2\delta HX_i\Bigr\}
\eta^{\gamma-1}\partial_i\eta.
\end{split}
\end{align}
Since $\gamma$ is fixed in terms of $n$ and $p$, it is absorbed into the
constants below. Choose $\varepsilon>0$ sufficiently small so that all terms
with coefficient $\gamma\varepsilon$ can be absorbed into the left-hand side.
Lemma~\ref{lem:tensor-positivity} gives
\begin{align}\label{eq:E-boundary-term}
\left|\int_{B_{2R}\setminus B_R}
u^\beta E_{ij}X_j\eta^{\gamma-1}\partial_i\eta\right|
&\leqslant\varepsilon\int_{B_{2R}}
u^\beta E_{ij}E_{ji}\eta^\gamma\notag\\
&\quad+C\int_{B_{2R}\setminus B_R}
u^\beta W^2|\nabla u|^2
\eta^{\gamma-2}|\nabla\eta|^2.
\end{align}
Since $f-h_1\in C([0,1])$, the function $a$ is bounded, and we similarly
obtain
\begin{align}
\begin{split}\label{eq:L-boundary-term}
&\left|\int_{B_{2R}\setminus B_R}
u^\beta a(x)L_{ij}X_j
\eta^{\gamma-1}\partial_i\eta\right|\\
&\quad\leqslant
\varepsilon\int_{B_{2R}}u^\beta L_{ij}L_{ij}\eta^\gamma
+C\int_{B_{2R}\setminus B_R}u^\beta W^2|\nabla u|^2
\eta^{\gamma-2}|\nabla\eta|^2.
\end{split}
\end{align}
The Cauchy--Schwarz inequality and
$L_{ij}L_{ij}=\frac{n-1}{n}\frac{|\nabla u|^4}{u^2}W^2$ give
\begin{align}\label{eq:H-boundary-term}
\left|\int_{B_{2R}\setminus B_R}
2\delta u^\beta HX_i\eta^{\gamma-1}\partial_i\eta\right|
&\leqslant\varepsilon\int_{B_{2R}}u^\beta H^2\eta^\gamma\notag\\
&\quad+C\int_{B_{2R}\setminus B_R}u^\beta W^2|\nabla u|^2
\eta^{\gamma-2}|\nabla\eta|^2,
\end{align}
and
\begin{align}\label{eq:gradient-boundary-term}
&\int_{B_{2R}\setminus B_R}u^\beta W^2|\nabla u|^2
\eta^{\gamma-2}|\nabla\eta|^2\notag\\
&\quad\leqslant
\varepsilon\int_{B_{2R}}u^\beta L_{ij}L_{ij}\eta^\gamma
+C\int_{B_{2R}\setminus B_R}
u^{\beta+2}W^2\eta^{\gamma-4}|\nabla\eta|^4.
\end{align}
Combining \eqref{eq:E-boundary-term}--\eqref{eq:gradient-boundary-term},
absorbing the terms with coefficient $\varepsilon$, and using
\eqref{eq:uniform-W-bound}, we obtain
\begin{equation}\label{eq:pre-young-estimate}
\int_{B_{2R}}u^{\beta+2p}\eta^\gamma
\leqslant C\int_{B_{2R}\setminus B_R}
u^{\beta+2}\eta^{\gamma-4}|\nabla\eta|^4.
\end{equation}

The subcritical assumption gives
\[
\beta+2>0,
\qquad
\beta+2p>\beta+2.
\]
Apply Young's inequality with the conjugate exponents
\[
r=\frac{\beta+2p}{\beta+2},
\qquad
r'=\frac{\beta+2p}{2(p-1)}.
\]
For every $\varepsilon>0$, this yields
\begin{align*}
&\int_{B_{2R}\setminus B_R}
u^{\beta+2}\eta^{\gamma-4}|\nabla\eta|^4\\
&\quad\leqslant
\varepsilon\int_{B_{2R}}u^{\beta+2p}\eta^\gamma
+C_\varepsilon\int_{B_{2R}\setminus B_R}
\eta^{\gamma-\frac{2(\beta+2p)}{p-1}}
|\nabla\eta|^{\frac{2(\beta+2p)}{p-1}}.
\end{align*}
The power of $\eta$ is positive by \eqref{eq:gamma-choice}. Substituting this
estimate into \eqref{eq:pre-young-estimate} and absorbing the first term gives
\begin{equation}\label{eq:final-radius-estimate}
\int_{B_{2R}}u^{\beta+2p}\eta^\gamma
\leqslant
CR^{n-\frac{2(\beta+2p)}{p-1}}.
\end{equation}
Finally,
\[
n-\frac{2(\beta+2p)}{p-1}
=(n+1)\frac{(n-2)p-(n+2)}{(n+2)(p-1)}-1<0.
\]
Letting $R\to\infty$ in \eqref{eq:final-radius-estimate} yields the desired
contradiction and completes the proof of Theorem~\ref{thm:main}.
\end{proof}

\subsection{Geometric consequence and the one-dimensional case}

\begin{proof}[Proof of Corollary~\ref{cor:geometric-halfspace}]
Let $\pi_P:\Sigma\to P$ be the orthogonal projection. For
$V\in T\Sigma$, the decomposition
\[
\mathrm dF(V)=\mathrm d\pi_P(V)+\mathrm d\tau(V)T
\]
and $\langle T,T\rangle_{\mathbb R^{n,1}}=-1$ give
\[
|\mathrm d\pi_P(V)|_P^2
=|V|_\Sigma^2+|\mathrm d\tau(V)|^2
\geqslant|V|_\Sigma^2.
\]
Thus $\pi_P$ is a distance-nondecreasing local diffeomorphism.

We include the covering argument. Fix $q_0\in\Sigma$, and let
$c:[0,1]\to P$ be a piecewise $C^1$ path beginning at $\pi_P(q_0)$. Suppose
that its maximal lift $\widetilde c$ starting at $q_0$ is defined on
$[0,b)$ with $b\leqslant1$. The preceding inequality gives
\[
|\widetilde c'(t)|_\Sigma\leqslant|c'(t)|_P.
\]
If $b<1$, the lift has finite length on $[0,b)$ and is Cauchy as
$t\uparrow b$. Completeness of $\Sigma$ gives a limit point, and the local
diffeomorphism property extends the lift beyond $b$, a contradiction. Hence
every such path lifts to $[0,1]$. It follows that $\pi_P$ is surjective and is
a covering map. Since $P\simeq\mathbb R^n$ is simply connected and $\Sigma$
is connected, $\pi_P$ is a global diffeomorphism.

Consequently, $F(\Sigma)$ is the entire spacelike graph over $P$ of the
nonnegative function $u=\tau$. After a Lorentz isometry sending $(P,T)$ to
$(\mathbb R^n\times\{0\},e_{n+1})$, the curvature equation becomes
\eqref{eq:main-equation}. Theorem~\ref{thm:main} gives $u\equiv0$, and hence
$F(\Sigma)=P$.
\end{proof}

\begin{remark}[The one-dimensional case]\label{rem:one-dimensional}
For $n=1$ and $p\geqslant1$, every nonnegative entire strictly spacelike
$C^2$ solution is zero. Indeed, if a solution is nontrivial, uniqueness for
the associated first-order system implies that it is positive. Set
\[
q=\frac{u'}{\sqrt{1-(u')^2}}.
\]
Then $q'=-u^p<0$. If $q$ is negative at some point, it remains bounded above
by a negative constant to the right, so $u'$ does as well and $u$ must cross
zero. If $q(x_0)\geqslant0$, then while $q\geqslant0$ one has
$u\geqslant u(x_0)>0$ and hence
$q'\leqslant-u(x_0)^p$; thus $q$ becomes negative in finite time, reducing to
the first alternative. Both possibilities are impossible.
\end{remark}

\section*{Acknowledgments}
Xi-Nan Ma and Tian Wu are supported by the National Key Research and
Development Program of China (Grant No.~2025YFA1017600). Xi-Nan Ma is
supported by the National Natural Science Foundation of China (Grant
No.~12141105). Tian Wu is supported by the Fundamental Research Funds for the
Central Universities (Grant No.~WK0010250106) and the University of Science
and Technology of China--Southwest University of Science and Technology Joint
Fund for Cooperation and Development (Grant No.~KY0010002501).

\section*{Declaration of generative AI and AI-assisted technologies in the manuscript preparation process}
During the preparation of this work, the authors used ChatGPT (OpenAI) to
improve the language and organization of the manuscript and to assist in
checking the consistency of the mathematical exposition. After using this
tool, the authors independently reviewed, edited, and verified the content as
needed and take full responsibility for the content of the published article.


\begin{thebibliography}{99}\small

\bibitem{ChengYau1976}
S.-Y. Cheng and S.-T. Yau,
\emph{Maximal space-like hypersurfaces in the Lorentz--Minkowski spaces},
Ann. of Math. 104 (1976), 407--419.

\bibitem{BartnikSimon1982}
R. Bartnik and L. Simon,
\emph{Spacelike hypersurfaces with prescribed boundary values and mean curvature},
Comm. Math. Phys. 87 (1982), 131--152.

\bibitem{Treibergs1982}
A. E. Treibergs,
\emph{Entire spacelike hypersurfaces of constant mean curvature in Minkowski space},
Invent. Math. 66 (1982), 39--56.

\bibitem{BornInfeld1934}
M. Born and L. Infeld,
\emph{Foundations of the new field theory},
Proc. R. Soc. Lond. Ser. A 144 (1934), 425--451.

\bibitem{BonheureDaveniaPomponio2016}
D. Bonheure, P. d'Avenia, and A. Pomponio,
\emph{On the electrostatic Born--Infeld equation with extended charges},
Comm. Math. Phys. 346 (2016), 877--906.

\bibitem{ByeonIkomaMalchiodiMari2024}
J. Byeon, N. Ikoma, A. Malchiodi, and L. Mari,
\emph{Existence and regularity for prescribed Lorentzian mean curvature
hypersurfaces, and the Born--Infeld model},
Ann. PDE 10 (2024), Paper No.~4, 86 pp.

\bibitem{BonheureDeCosterDerlet2012}
D. Bonheure, C. De Coster, and A. Derlet,
\emph{Infinitely many radial solutions of a mean curvature equation in
Lorentz--Minkowski space},
Rend. Istit. Mat. Univ. Trieste 44 (2012), 259--284.

\bibitem{Azzollini2014}
A. Azzollini,
\emph{Ground state solution for a problem with mean curvature operator in
Minkowski space},
J. Funct. Anal. 266 (2014), 2086--2095.

\bibitem{Azzollini2016}
A. Azzollini,
\emph{On a prescribed mean curvature equation in Lorentz--Minkowski space},
J. Math. Pures Appl. (9) 106 (2016), 1122--1140.

\bibitem{GidasSpruck1981}
B. Gidas and J. Spruck,
\emph{Global and local behavior of positive solutions of nonlinear elliptic equations},
Comm. Pure Appl. Math. 34 (1981), 525--598.

\bibitem{GidasNiNirenberg1979}
B. Gidas, W.-M. Ni, and L. Nirenberg,
\emph{Symmetry and related properties via the maximum principle},
Comm. Math. Phys. 68 (1979), 209--243.

\bibitem{CaffarelliGidasSpruck1989}
L. A. Caffarelli, B. Gidas, and J. Spruck,
\emph{Asymptotic symmetry and local behavior of semilinear elliptic equations
with critical Sobolev growth},
Comm. Pure Appl. Math. 42 (1989), 271--297.

\bibitem{SerrinZou2002}
J. Serrin and H. Zou,
\emph{Cauchy--Liouville and universal boundedness theorems for quasilinear
elliptic equations and inequalities},
Acta Math. 189 (2002), 79--142.

\bibitem{Obata1962}
M. Obata,
\emph{Certain conditions for a Riemannian manifold to be isometric with a sphere},
J. Math. Soc. Japan 14 (1962), 333--340.

\bibitem{MaWu2026}
X.-N. Ma and W. Wu,
\emph{Liouville theorem for elliptic equations with a source reaction term
involving the product of the function and its gradient in $\mathbb R^n$},
Bull. Sci. Math. 206 (2026), 103747.

\bibitem{MaWuInvariant2024}
X.-N. Ma and T. Wu,
\emph{The application of the invariant tensor technique in the classification
of solutions to semilinear elliptic and sub-elliptic partial differential
equations},
Sci. Sin. Math. 54 (2024), 1627--1648 (in Chinese).

\bibitem{GT}
D. Gilbarg and N. S. Trudinger,
\emph{Elliptic Partial Differential Equations of Second Order}, 2nd ed.,
Classics in Mathematics, Springer, Berlin, 2001.

\end{thebibliography}
\end{document}